\documentclass[11pt]{amsart}
\setcounter{tocdepth}{1}
\usepackage{amssymb,amsthm,verbatim, amsmath,latexsym,epsfig, amscd, graphicx, graphics, mathrsfs}
\usepackage[dvips]{color}

\newtheorem{theorem}{Theorem}[section]
\newtheorem{question}{Question}[section]
\newtheorem{lemma}[theorem]{Lemma}
\newtheorem{corollary}[theorem]{Corollary}

\theoremstyle{definition}
\newtheorem{definition}[theorem]{Definition}

\theoremstyle{remark}
\newtheorem{remark}[theorem]{Remark}
\numberwithin{equation}{section}

\newcommand{\m}{\mathrm{mod}}

\newcommand{\g}{\gamma}

\newcommand{\eps}{\varepsilon}
\newcommand{\la}{\lambda}

\newcommand{\G}{\Gamma}

\newcommand{\diam}{\mathrm{diam}}
\newcommand{\dist}{\mathrm{dist}}

\newcommand{\calA}{\mathcal{A}}
\newcommand{\calE}{\mathcal{E}}
\newcommand{\calB}{\mathcal{B}}

\newcommand{\calH}{\mathcal{H}}
\newcommand{\calL}{\mathcal{L}}

\def\Bbb{\mathbb}

\def\dist{{\rm{dist}}}

\def\dist{{\rm{dist}}}
\def\diam{{\rm{diam}}}
\def\dim{{\rm{dim}}}

\def\Bbb{\mathbb}
\def\reals{\Bbb R}

\def\cal{\mathcal}
\renewcommand{\mod}{\mathrm{mod}}
\DeclareMathOperator*{\esssup}{ess\,sup}
%\addtolength{\textheight}{1in}
%\addtolength{\textwidth}{1.0in}
%\addtolength{\topmargin}{-.25in}
%\addtolength{\leftmargin}{-.1in}
%\hoffset -.5in
\begin{document}
%\baselineskip=15pt
%\title{Frequency of Hausdorff dimension distortion under Quasisymmetric Mappings}
\title[QS distortion of Ahlfors regular subsets]{Quasisymmetric dimension distortion \\of
Ahlfors regular subsets of a metric space}
\author[C.J.Bishop, H.Hakobyan and M. Williams]{Christopher J. Bishop, Hrant Hakobyan and Marshall Williams}
\address{Department of Mathematics, Stony Brook University, Stony Brook, NY 11794-3651.}
\thanks{C.~B. was partially supported by NSF Grant DMS-13-05233.}
\email{bishop@math.sunysb.edu}
%
%\author{}
\address{Department of Mathematics, Kansas State University, Manhattan, KS, 66506-2602}
\thanks{H.~H. was partially supported by Kansas NSF EPSCoR Grant NSF68311}
\email{hakobyan@math.ksu.edu}
%
%\author{Marshal}
\address{Department of Mathematics, Kansas State University, Manhattan, KS, 66506-2602}
%\thanks{H.~H. was partially supported by Kansas NSF EPSCoR Grant NSF68311}
\email{mcwill@math.ksu.edu}

\begin{abstract}
We show that if $f:X\to Y$ is a quasisymmetric mapping between Ahlfors
regular spaces, then $\dim_H f(E)\leq\dim_H E$ for ``almost every'' bounded
Ahlfors regular set $E\subseteq X$. If additionally, $X$ and $Y$ are
Loewner spaces then $\dim_H f(E)=\dim_H E$ for ``almost every" Ahlfors
regular set $E\subset X$. The precise statements of these results are given
in terms of Fuglede's modulus of measures. As a corollary of these general
theorems we show that if $f$ is a quasiconformal map of $\mathbb{R}^N$,
$N\geq 2$, then for Lebesgue a.e. $y\in\mathbb{R}^N$ we have $\dim_H f(y+E)
= \dim_H E$. A similar result holds for Carnot groups as well.

 For planar quasiconformal maps, our general estimates imply that if $E
\subset \reals$ is Ahlfors $d$-regular, $d<1$, then some component of $f(E
\times \reals)$ has dimension at most $2/(d+1)$, and we construct examples
to show this bound is sharp. In addition, we show there is a
$1$-dimensional set $S\subseteq \mathbb R$ and planar quasiconformal map
$f$ such that $f(\reals \times S)$ contains no rectifiable sub-arcs. These
results generalize work of Balogh, Monti and Tyson \cite{Tyson:frequency}
and answer questions posed in \cite{Tyson:frequency} and \cite{AimPL}.
\end{abstract}

\subjclass[2000]{Primary 30C65; Secondary 28A78}
\date{\today}
\maketitle
\tableofcontents
\newpage

\section{Introduction}
\subsection{Dimension preservation of random translates}
A quasiconformal image of a single line can be a fractal curve of Hausdorff
dimension greater than $1$. However, a quasiconformal mapping $f$ of
$\mathbb{R}^N$ is ACL for $N\geq 2$, cf. \cite{Ahlfors:QClectures,
Vaisala:lectures}. This means that $f$ is absolutely continuous on almost all
lines parallel to coordinate axes. It follows, that the image of almost every
such line is a locally rectifiable curve of Hausdorff dimension $1$. In
particular, for a quasiconformal mapping $f$ of $\mathbb{R}^N$ and a line segment
$L\subset\mathbb{R}^N$ we have
\begin{equation}\label{intervaldimpreservation}
\dim_H f(y+L)= \dim_H L = 1,
\end{equation}
for $\calH^N$-a.e. $y\in\mathbb{R}^N$. In this paper we prove the following
generalization of this result.

\begin{theorem}\label{thm:randomtranslates}
If  $f$ is a quasiconfomal mapping of $\mathbb{R}^N$, $N\geq 2,$ and
$E\subset\mathbb{R}^N$ is a bounded Ahlfors regular set then
  \begin{equation}
      \dim_H f(y + E) = \dim_H E,
  \end{equation}
for $\calH^N$-a.e. $y\in\mathbb{R}^N$.
\end{theorem}

Here $E$ is an Ahlfors $d$-regular set if Hausdorff $d$-measure of a ball of
radius $r$ centered at a point in $E$ is comparable to $r^d$, see Section
\ref{Section:preliminaries} for the precise definition.

%Examples of Ahlfors
%regular sets include the classical self-similar fractals (Cantor set,
%Sirpinski carpet, Van-Koch snowflake, etc.), as well as many fractal sets
%appearing in holomorphic dynamics (Julia sets of hyperbolic polynomials) and
%in the theory of Kleinian groups (limit sets of geometrically finite Kleinian
%groups).

Theorem \ref{thm:randomtranslates} is a special case of Theorem
\ref{thm:carnot} on Carnot groups. The latter says that for a Carnot group
$\mathbb{G}$ of Hausdorff dimension $Q$ (with respect to its
Carnot-Caratheodory metric)  we have
$$\dim_H f(y \cdot E) = \dim_H E,$$
for  $\calH^Q$-a.e.  $y\in\mathbb{G}$ and for
 any  bounded Ahlfors regular  subset $E\subset\mathbb{G}$.
Theorem \ref{thm:carnot} is stated and proved in Section
\ref{Section:translatesproductsproofs} by  using  Theorem \ref{thm:equaldim}
(see the next paragraph)  and an estimate for modulus of measures, Lemma
\ref{lem:translatesmodulus}. Theorem \ref{thm:equaldim}, in turn, is a
special case of a more general result about the modulus of the quasisymmetric
image of  a family of lower-regular measures  with respect to an
upper-regular base measure  (see Theorem \ref{theorem:expansion}).

%Theorem \ref{thm:randomtranslates} is in fact a special case of a more
%general result for Carnot groups, Theorem \ref{thm:carnot}. The latter says
%that for a Carnot group $\mathbb{G}$ we have
%$$\dim_H f(y \cdot E) = \dim_H E,$$
%for a bounded Ahlfors regular  subset $E\subset\mathbb{G}$ and $\calH^Q$-a.e.
%$y\in\mathbb{G}$, where $Q$ is the Hausdorff dimension of $\mathbb{G}$ in its
%Carnot-Caratheodory metric. Theorem \ref{thm:carnot} is stated and proved in
%Section \ref{Section:translatesproductsproofs} by  combining Theorem
%\ref{thm:equaldim} below with a key estimate for modulus of measures, Lemma
%\ref{lem:translatesmodulus}.

%distortion of Hausdorff dimension of ``generic" subsets of a metric space
%under quasisymmetric mappings described.
% even though the quasiconformal image of a single line can be a fractal
%curve (e.g., the von Koch snowflake), the ACL property of quasiconformal (or
%just QC) maps implies that almost every line must have a locally rectifiable
%image, cf. \cite{Ahlfors:QClectures, Vaisala:lectures}.

\subsection{Dimension distortion of ``generic" Ahlfors regular subsets}

Almost as well known as the ACL property is the slightly stronger fact that a
quasiconformal mapping of $\mathbb{R}^{N}$ is absolutely continuous on
``almost every'' curve, where ``almost every'' is understood in the sense of
conformal modulus of curve families (see Section \ref{section:moduli} below).
A fortiori, if $f$ is a quasisymmetric mapping of $\mathbb{R}^N$, we have
\begin{equation}\label{dimequality:curves}
\dim_H f(\g) = \dim_H \g=1
\end{equation}
for ``almost every" rectifiable curve $\g$ in $\mathbb{R}^N$.

In this paper we utilize Fuglede's modulus of families of measures \cite{Fug}
to introduce the notion of ``almost every Ahlfors $d$-regular subset" of a
metric measure space $X$, see Section \ref{section:conformal} below. This in
turn allows us to generalize equality (\ref{dimequality:curves}) and to show
that dimension preservation of ``generic subsets" holds under a mild
assumption.

\begin{theorem}\label{thm:equaldim}
If $f:X\to Y$ is a quasisymmetric mapping between Ahlfors $D$-regular spaces,
$D>1$, which satisfies condition $N^{-1}$, then for every $0 < d \leq D$ we
have
\begin{equation}
%\label{fundamentalinequality}
  \dim_H f(E) = \dim_H E,
\end{equation}
for $\mod_{D/d}$-almost every bounded Ahlfors $d$-regular set $E\subset X$.
\end{theorem}

%\subsection*{Carnot groups and products spaces}

%Theorem \ref{thm:carnot} follows from the following result, which states that

Theorem \ref{thm:equaldim} is proven in Section
\ref{section:nonexpansionproofs}. Recall that a homeomorphism $f\colon
X\rightarrow Y$ between Ahlfors $D$-regular metric spaces satisfies Lusin's
condition $N$ if whenever $A\subseteq X$ is such that $\calH^D(A)=0$ then
$\calH^D(f(A))=0$. We say that $f$ satisfies condition $N^{-1}$ if $f^{-1}$
satisfies condition $N$.

We say that condition $N^{-1}$ is mild because it is known to hold, not only
in the classical case where $X$ and $Y$ are Euclidean domains, but more
generally, when $X$ and $Y$ are domains in $D$-regular, $D$-Loewner spaces,
see e.g. \cite{HKST}. Recall, that Loewner spaces constitute a large class of
metric spaces introduced by Heinonen and Koskela \cite{HK:Acta}, that
includes Carnot groups equipped with their Carnot-Carath\'eodory metrics.
Indeed, conditions $N$ and $N^{-1}$ are always satisfied for quasiconformal
mappings in this setting. See \cite{HK:Acta} and \cite{HKST}  for the
definitions of Loewner spaces, and the basic theory of QC mappings between
them. Note that locally, quasiconformality and quasisymmetry are equivalent
in this setting \cite[Theorem 9.8]{HKST}, hence our casual conflation of the
terms when we discuss mappings in these spaces. Thus we have the following
consequence of Theorem \ref{thm:equaldim}.

\begin{corollary}
Let $f:X\to Y$ be a quasiconformal mapping between Ahlfors $D$-regular,
$D$-Loewner spaces, $D>1$. Then for every $0 < d \leq D$ we have
\begin{equation}
%\label{fundamentalinequality}
  \dim_H f(E) = \dim_H E,
\end{equation}
for $\mod_{D/d}$-almost every bounded Ahlfors $d$-regular set $E\subset X$.
\end{corollary}

We do not know if the equality in
 Theorem \ref{thm:equaldim} holds for all quasisymmetric
mappings between Ahlfors regular spaces. However, we will prove that an
inequality does hold generally.

\begin{theorem}
\label{thm:nonexpand} If  $f:X\to Y$ is a quasisymmetric mapping of Ahlfors
$D$-regular spaces, $D>1$, then for every $0 < d\leq D$ we have
\begin{equation}
\label{fundamentalinequality}
  \dim_H f(E) \leq \dim_H E,
\end{equation}
for $\mod_{D/d}$-almost every bounded Ahlfors $d$-regular set $E\subset X$.
\end{theorem} %, is more technical greatly generalizes this result to families of suitably regular sets in quite general metric spaces. The following is an easy corollary of Theorem \ref{theorem:expansion}.

Thus ``generic non-expansion'' holds true for every quasisymmetric mapping
between arbitrary Ahlfors regular spaces. Theorem \ref{thm:nonexpand} is a
special case of Corollary \ref{thm:compression}, which itself is an immediate
Corollary of Theorem \ref{theorem:expansion}; the latter is the main result
of the first half of the paper and states  that ``non-expansion" holds under
much more relaxed conditions than those in Theorem \ref{thm:nonexpand}.

\subsection{Exceptional ``fibers" in product spaces}

For a quasiconformal map of $\mathbb{R}^N$, the ACL condition implies
\begin{equation}\label{eq:measure-unrect}
  \calH^{N-1}(\{y\in\mathbb{R}^{\bot}: \dim f(y+\mathbb{R}) >1 \})=0.
\end{equation}

Dimension distortion of ``generic subspaces" of a Euclidean space under
quasiconformal mappings has recently been studied by Balogh, Monti and Tyson
in \cite{Tyson:frequency}, along with similar explorations for Sobolev
mappings $f\in W^{1,p}(\mathbb{R}^N, Y)$,  $p > N$, where $Y$ is a metric
space. In particular they considered the size of the ``exceptional" family of
parallel $n$ dimensional subspaces whose image experiences a prespecified
jump in dimension. More precisely, it was proved in \cite{Tyson:frequency}
that if $n$ is an integer between $1$ and $N$ and $n'\geq n$ then
  \begin{equation}\label{ineq:Tyson-frequency}
    \calH^{\frac{n}{n'}N-n } \{y\in (\mathbb{R}^n)^{\bot}: \dim f(y+\mathbb{R}^n)> n' \} = 0.
  \end{equation}
Thus the Hausdorff dimension of the ``exceptional" $n$-dimensional subspaces
of $\mathbb{R}^N$ whose dimension may jump over $n'>n$ is at most
$\frac{n}{n'}N-n$, which is strictly less than $N-n$.

The methods in \cite{Tyson:frequency} relied heavily on properties of
Euclidean space, particularly the foliation by affine subspaces, Lipschitz
retractions, the Besicovitch covering theorem, and so forth, that need not
hold in greater generality.  As a result, the results there assume that the
source space be a Euclidean domain.  The authors concluded by asking
\cite[Problem 6.5]{Tyson:frequency} what could be said for more general
spaces, providing the broad motivation for the present paper, which addresses
this question in the setting of quasisymmetric maps between metric spaces.

The questions about dimension distortion of ``generic subspaces" for more
general source spaces, e.g. for the Heisenberg group, have been further
investigated by Balogh, Mattila, Tyson and Wildrick, cf.
\cite{BMT:Grassmanian},\cite{BTW:dimdistortion},\cite{BTW:heisenberg}. In
fact, in \cite{BTW:dimdistortion} and \cite{BTW:heisenberg} a general form of
inequality (\ref{ineq:Tyson-frequency}) was obtained for quasiconformal (and
Sobolev) mappings defined on metric spaces supporting Poincar\'e
inequalities, e.g. the Heisenberg group. Our distortion estimates allow us to
generalize (\ref{ineq:Tyson-frequency}) to even more general product spaces,
(thus no Poincar\'e inequality or even connectivity of the source space is
assumed).

%We refer the reader to \cite{Tyson:frequency} and the references therein for
%the general history of such investigations.

%We also show that in the planar case, our results are sharp, answering
%\cite[Problems 6.3 and 6.4]{Tyson:frequency}.

%It would be interesting to find out if some of the results of the present
%paper extend to Sobolev mappings, see Section
%\ref{Section:remarks&corollaries} for a precise formulation.

%They also asked \cite[Problem 6.5]{Tyson:frequency} if it is possible to
%generalize (\ref{ineq:Tyson-frequency}) to more general source spaces than
%$\mathbb{R}^N$.

\begin{theorem} \label{expandsmetricquantitative}
Suppose  $E$ is Ahlfors $d$-regular, $F$ is doubling, and  %$f\colon E\times F\rightarrow Y$ is a quasisymmetric map
$f$ is a quasisymmetric map on $E\times F$ with $\dim_H(f(E\times F))\leq
D'$.  Then  for every number $d'\leq D'$,
\begin{equation}\label{badsetnullset}
  \calH^{\frac{d}{d'}D'-d}(\{y\in F:\dim_H f(E\times\{y\})>d'\}))=0\text{.}
\end{equation}
\end{theorem}

Theorem \ref{expandsmetricquantitative} is proven in Section
\ref{Section:products}. Note that for $E=\mathbb{R}^n$ and
$F=\mathbb{R}^{N-n}$, and $f\colon \mathbb{R}^N\rightarrow \mathbb{R}^N$
quasiconformal, we recover the result of Balogh, Monti and Tyson
(\ref{ineq:Tyson-frequency}). However, note also that (\ref{badsetnullset})
not only generalizes (\ref{ineq:Tyson-frequency}) by allowing the set $E$ to
have arbitrary dimension $d$, but also by not requiring to have $d'\geq d$.
The latter inequality of course holds necessarily in the case of
(\ref{ineq:Tyson-frequency}), since $\dim_H(f(y+\mathbb{R}^n )) \geq n$ for
every homeomorphism. In our case, on the other hand, it is possible that
$\dim_H (f(E\times \{y\}))< d$ and moreover $\dim_H (f(E\times F))\leq
D'<\dim_H (E\times F)$ in which case Theorem \ref{expandsmetricquantitative}
estimates the dimension of $y$'s for which $\dim_H f(E\times\{y\})$ is
greater than some $d'<\dim_H f(E\times F)$. For instance, let $E$ be the
$1/2$ dimensional ``$1/4$-Cantor set" obtained from the unit interval $[0,1]$
by dividing it into $4$ equal intervals, removing the middle two intervals
and repeating the process with the remaining intervals. Then $E\times E$ is
the well known four-corner Cantor set of dimension $1$, which has conformal
dimension $0$, see e.g. \cite[Theorem 1.4]{Mackay} or \cite{Tyson:assouad},
and therefore there is a quasisymmetric map  $f$ such that  $\dim_H f(E\times
E)\leq 1$. Theorem \ref{expandsmetricquantitative} then implies that the set
of $y$'s such that $\dim_H f(E\times\{y\})=1$ is $0$-dimensional.

As a consequence of Theorem \ref{expandsmetricquantitative}, we will prove in
Section \ref{Section:products} the following  bound on the infimal dimension
distortion of the fibers.

\begin{corollary} \label{expandsmetric}
Let $E$, $F$, and $f$ satisfy the assumptions of Theorem
\ref{expandsmetricquantitative}. Then
\begin{equation}\label{expands metric}
  \inf_{y\in F}\dim_H f(E\times\{y\})\leq
    \frac{\dim_H f(E\times F)}{\dim_H(E\times F)}\cdot \dim_H (E)\text{.}
\end{equation}
\end{corollary}

Corollary \ref{expandsmetric} is already interesting when $f\colon \mathbb
R^2\rightarrow \mathbb R^2$ is quasiconformal, and $E$ and $F$ lie in the
coordinate axes.

For example, consider a Borel set $S\subseteq \mathbb R$, with $t=\dim_H(S)$.
Applying the corollary to the case $E= {\mathbb R}$ and $F=S$,  inequality
(\ref{expands metric}) becomes
%(via the fact that  $\dim_H (\mathbb{R}\times Y) = 1+\dim_H Y$, see e.g. Theorem $8.10$ in \cite{Mattila}, below)
\begin{equation}
\label{planarexample1}
    {\inf_{y \in S} \dim_H f({\mathbb R} \times\{y\})} \leq
    \frac{2}{ t+1}.
\end{equation}

On the other hand, if we additionally assume that $S$ is Ahlfors $t$-regular,
then we may apply Corollary \ref{expandsmetric} with $E=S$ and $F= \mathbb
R$, and obtain (interchanging the order of the factors)
\begin{equation}
\label{planarexample2}
    \inf_{x \in {\mathbb R}}  \dim_H f( \{x\} \times S) \leq
    \frac{2 t }{  t+1 }.
\end{equation}

\subsection{Sharpness of dimension distortion bounds in the plane}

Theorem \ref{expandsmetricquantitative} and Corollary \ref{expandsmetric} are
quite sharp in the planar case.  Our next result establishes the optimality
of  estimates \eqref{planarexample1} and \eqref{planarexample2} in one fell
swoop, as consequences of a stronger result.

\begin{theorem} \label{first thm}
For every $0< t < 1$, there is an Ahlfors $t$-regular Cantor set $S\subseteq
\mathbb R$ such that for each $\epsilon>0$, there is a quasiconformal mapping
$f\colon \mathbb R^2\rightarrow \mathbb R^2$, so that for every Borel subset
$A\subseteq \mathbb R\times S$,
\begin{equation}
\label{megasharp}
\dim_H(f(A))\geq (1-\epsilon)\frac{2\dim_H(A)}{t+1}\text{.}
\end{equation}

In particular,
  \begin{eqnarray}
   \label{first estimate}   \inf_{y\in S}\dim_H f(I \times \{y\}) &\geq& (1 - \epsilon)\frac{2}{t+1} ,\\
   \label{second estimate}   \inf_{x\in I } \dim_H f(\{x\}\times S) &\geq& (1 - \epsilon)\frac{2t}{t+1},
  \end{eqnarray}
for every  interval $I \subset \mathbb{R}$. % Moreover, for any
%subset $S' \subset S$, we have
%\[
%\inf_{x\in I } \dim_H f(\{x\}\times S') \geq \frac{2\dim_H(S')}{d+1} - \epsilon
%\]

\end{theorem}

This will be proven in Section \ref{Section:sharpnessproofs}. As a corollary
to Theorem \ref{first thm}, we will also obtain the sharpness of
\eqref{ineq:Tyson-frequency} for $N=2$ and $n=1$, and answer \cite[Problem
6.3]{Tyson:frequency} in the affirmative for the planar case.

\begin{corollary}\label{cor:sharp-dimension}
  For every $d'>1$ and $\eta>0$ there is a quasiconformal map $f$ of the plane such that
  \begin{equation}
    \dim_H\{y\in \mathbb{R}: \dim_H f([a,b]\times \{y\})> d', \forall [a,b]\subseteq \mathbb R\} \geq \frac{2}{d'}-1-\eta.
  \end{equation}
\end{corollary}

\subsection*{Sharpness of rectifiability for images of lines}
It is instructive to consider the preceding results in the context of lines
parallel to the $x$-axis.  By the ACL property for quasiconformal maps, if
$f([0,1]\times \{y\}$ has infinite length for every $y\in S\subseteq [0,1]$,
then $\calH^1(S)=0$. It is known that this is rather sharp; Heinonen and
Rhode showed there are examples where $\dim_H(S)=1$ \cite[Theorem 1.9]{HR}.
Even so, in that result the images  $f(\mathbb{R}+y)$ contain many
rectifiable subarcs.  The question of how many lines have purely
unrectifiable images, i.e., images containing no rectifiable subarcs, has
proved to be a much thornier matter. Kovalev and Onninen
\cite{Kovalev:variation} proved that given any countable collection
${\mathcal L}$  of parallel lines in $\mathbb{R}^2$ there is planar
quasiconformal image of ${\mathcal L}$ that contains no rectifiable subarcs,
but until now, it was not known if this could be extended to uncountable
families.   Problem $6.4$ of \cite{Tyson:frequency} and Problem $5.3$ of
\cite{AimPL} ask if this can even be improved at all, i.e., whether there is
even a single uncountable family of lines with this property.

Corollary \ref{cor:sharp-dimension} above already answers this question with
a resounding yes; if, for every $y\in S$, we require that $f(\mathbb
R\times\{y\})$  have no rectifiable subarcs, then the corollary tells us not
only that $S$ can be uncountable, but that $\dim_H(S)$ can  be arbitrarily
close to $1$, and furthermore,  not only may the images of subarcs be
unrectifiable, but they can be taken to have dimension uniformly bounded away
from $1$.  Moreover, from Theorem \ref{first thm} itself, we see that the
dimension distortion factor may be very strongly uniform -- it can be taken
to apply not only to subintervals, but to arbitrary Borel subsets of the
product $S\times \mathbb R$.

If we insist on uniform dimension expansion, then Corollary
\ref{expandsmetric} shows, via estimate \eqref{planarexample1}, that this is
the best we can do --- the set $S$ in the preceding result cannot have
dimension $1$, in contrast to \cite[Theorem 1.9]{HR}.

Our next result shows that if we sacrifice dimension distortion, and merely
ask for unrectifiability of subarcs of the images of lines, then $S$ can
indeed have dimension $1$, and can in a certain sense be as large as
possible, giving a different, yet equally vociferous, ``yes'' to
\cite[Problem 6.4]{Tyson:frequency}.

\begin{theorem}\label{thm:unrectifiable}
For every increasing function $h$ on $[0, \infty)$ such that
\begin{equation}
\label{gaugelimit}
\limsup_{t \to 0}    \frac {h(t)}{t} = \infty\text{,}
\end{equation}
there is a compact set $S \subset [0,1]$ and a quasiconformal map $f$ so that
\begin{enumerate}
\item The quasiconformal constant of $f$ is bounded independent of $h$ and
    $S$.
\item $S$ has infinite Hausdorff measure with respect to $h$ (cf.\
    Definition \ref{def:hmeas} below).
\item $f([0,1]\times \{y\})$ contains no rectifiable subarc for any $y \in
    S$.
\end{enumerate}
\end{theorem}

Theorem \ref{thm:unrectifiable} will be proven in Section
\ref{sec:unrectifiable}. In particular, taking $h(t)=t|\log t|$ in the
preceding theorem produces a compact set $S \subset [0,1]$ of Hausdorff
dimension
 $1$  and a quasiconformal
map $f$ of the plane so that $f([0,1]\times \{y\})$ contains
 no rectifiable subarcs for any $y \in S$.

Note that by the ACL property, the condition \eqref{gaugelimit} on the gauge function $h$ is sharp. %; as a consequence of Theorem \ref{expandsmetricquantitative}, $S$ cannot be constructed to have infinite, nor even positive, Hausdorf $1$-measure. %, and therefore such a set exists if and only \eqref{gaugelimit} is satisfied.

% In fact,
%in \cite{Kovalev:variation} $f$ belongs to a more restricted class of reduced

\subsection*{Conformal dimension}
Rewriting inequality (\ref{expands metric}) as follows
\begin{equation}\label{ratios1}
  \inf_{y\in F} \frac{\dim_H f(E\times \{y \})} { \dim_H (E \times \{y\})}
 \leq \frac{\dim_H f(E\times F)}{\dim_H (E\times F)},
\end{equation}
we obtain the principle {\emph{``fiberwise expansion implies global
expansion"}}: if every  fiber $E \times \{y\} $ has its dimension increased
by a factor $\alpha \geq 1$, then the dimension of the whole product $E\times
F$ increases by at least a factor of $\alpha$ as well.

This principle has an immediate implication when considering conformal
dimension.  Recall that the \textit{conformal dimension} of a metric space is
the infimal Hausdorff dimension of its image under any quasisymmetric map,
i.e.,
$$\dim_C X = \inf_{f\in QS(X)} \dim_H f(X) \text{,}$$
where $QS(X)$ is the class of all quasisymmetric maps on $X$. In the event
that $E$ is minimal for conformal dimension (i.e., $\dim_H E = \dim_C E$),
%i.e. if $\dim_H f(E) \geq \dim_H E$ for any quasisymmetric mapping $f$,
the inequality \eqref{ratios1} then implies $E\times F$ is minimal as well.
When  $E=\mathbb{R}$ this gives a well known result of Tyson
\cite{Tyson:minimality}. Also see the discussion after Corollary
\ref{thm:compression} for more general results in this vein.
%\end{remark}

\begin{remark}
We cannot reverse (\ref{ratios1}) by replacing the infimum by a supremum.
   Consider a quasiconformal map $f$ that maps $\{0\}\times \reals$ to a curve
   of dimension $D >1$, but is smooth elsewhere. If $F = \reals$ and
   $E\subset \reals$ contains $0$ and has dimension $0 < d < D-1$,
   then the right side of (\ref{ratios1}) is at least $D/(d+1) > 1$, but the
   smoothness of $f$ off of $\{0\}\times \reals$ implies
   $\dim f(E \times \{y\}) / \dim(E \times \{y\})=1 $ for every $y$.
%Here we note that the converse of (\ref{ratios1}) does not hold, i.e. we may not in general bound $\frac{\dim_H f(E\times Y)}{\dim_H (E\times Y)}$ from above by $\sup_{y\in Y} \frac{\dim_H f(E\times \{y \})} { \dim_H (E \times \{y\})}.$ Indeed, let us show that there is a set $E\subset\mathbb{R}$ and a quasiconformal mapping $f$ of the plane such that for every $y\in i \mathbb{R}$ the following holds
%\begin{equation}\label{ratios2}
%  1=\frac{\dim_H f(E\times \{y \})} { \dim_H (E \times \{y\})}
% < \frac{\dim_H f(E\times i\mathbb{R})}{\dim_H (E\times i \mathbb{R})}.
%\end{equation}
%For this, take a quasiconformal mapping $f$ of the plane which takes the vertical line $i\mathbb{R}$ onto a quasiline $\g=f(i\mathbb{R})$ of Hausdorff dimension $D'>1$ and also $f$ is smooth on $\mathbb{C}\setminus i\mathbb{R}$. Now take a set $E\subset\mathbb{R}$ of Hausdorff dimension $d\in(0,1)$, where $d$ is chosen so that $d+1<D'$ and $E$ is such that $\dim (E\times i \mathbb{R}) =1+d$. Then
%$$\frac{\dim f(E\times i\mathbb{R})}{\dim (E\times i\mathbb{R})} \geq \frac{\dim \g }{1+d} = \frac{D'}{1+d}  >1.$$
%On the other hand, since $f$ is smooth in the complement of $i\mathbb{R}$ we have that $\dim f(E \times \{y\}) = \dim (E \times \{y\})$ for every $y\in i\mathbb{R}$, which proves (\ref{ratios2}).
In other words {\textit{``global expansion does not imply fiberwise
expansion"}}.
\end{remark}

\vskip .2cm This paper is organized as follows. In Section
\ref{Section:preliminaries} we review the necessary definitions and
preliminary results needed in the paper. In Section \ref{section:moduli} we
define the various versions of modulus and prove Theorem \ref{thm:carnot}
assuming Theorem \ref{thm:equaldim}. Theorem \ref{thm:randomtranslates} is a
particular case of Theorem \ref{thm:carnot}. In Section \ref{Section:results}
we state more general versions of Theorem \ref{thm:nonexpand}, and some
corollaries; the reader will observe that Theorem \ref{thm:nonexpand} is a
special case of Corollary \ref{thm:compression}. In Sections
\ref{section:nonexpansionproofs}, \ref{Section:products} and
\ref{Section:sharpnessproofs} we prove the remaining theorems and corollaries
given in this introduction. In Section \ref{Section:remarks&corollaries} we
list some open problems.

\subsection*{Acknowledgements}
Work on this paper was started after the second author visited the workshop
``Mapping theory in metric spaces" organized by Luca Capogna, Jeremy Tyson,
and Stefan Wenger, which was held at the American Institute of Mathematics.
We thank the institute for its hospitality. We would also like to thank
Pietro Poggi-Corradini and Jeremy Tyson for numerous comments and discussions
about the paper. We are particularly grateful to Leonid Kovalev for careful
reading and very detailed comments which greatly improved the paper. We also
thank the referee for a careful reading of the  manuscript and a thoughtful
report that included numerous suggestions that improved the paper.

\section{Measures and mappings}\label{Section:preliminaries}

\subsection{Measures, dimension and Ahlfors regularity}
Unless otherwise stated, the metric spaces in this paper are assumed to be
separable. Given a metric space $X=(X,d_X)$ we will denote by $B(x,r)$ the
closed ball in $X$ of radius $r$ centered at $x$. When $B=B(x,r)$ and $k>0$,
we denote by $kB$ the ball $B(x,kr)$. If the metric space $X$ is clear from
the context we will often denote the metric $d_X$ by $d$.

\begin{lemma}[\textbf{Covering Lemma}]\label{covlemma}
Every family $\calB$ of balls of bounded diameter in a compact metric space
$X$ contains a countable subfamily of disjoint balls $B_i\subset\calB$ such
that
$$\bigcup_{B\in\calB}B\subset\bigcup_{i}5B_i.$$
\end{lemma}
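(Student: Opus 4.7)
The plan is to carry out the standard greedy selection by ``generations of size'', then verify the $5$-fold enlargement property.

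First I would let $R=\sup_{B\in\calB}\diam B<\infty$ and group the balls by scale: for each integer $n\ge 0$, set
$$\calB_n=\{B\in\calB: R/2^{n+1}<\diam B\le R/2^n\}.$$
Then I would inductively extract a maximal disjoint subfamily at each generation. Concretely, let $\calD_0\subset\calB_0$ be a maximal pairwise disjoint subcollection (using Zorn's lemma), and once $\calD_0,\dots,\calD_{n-1}$ are chosen, let $\calD_n\subset\calB_n$ be a maximal subcollection whose members are pairwise disjoint and disjoint from every ball in $\calD_0\cup\cdots\cup\calD_{n-1}$. Finally set $\{B_i\}:=\bigcup_{n\ge 0}\calD_n$.

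Next I would verify the covering property. Given any $B\in\calB$, pick $n$ with $B\in\calB_n$. By maximality of $\calD_n$ (within $\calB_n$ relative to the already-chosen balls), there must exist some $B'\in\calD_0\cup\cdots\cup\calD_n$ with $B\cap B'\ne\emptyset$; otherwise $\calD_n\cup\{B\}$ would contradict maximality. Since $B'$ belongs to some $\calB_k$ with $k\le n$, we have $\diam B'>R/2^{k+1}\ge R/2^{n+1}\ge \tfrac{1}{2}\diam B$, so $\diam B\le 2\diam B'$. Combined with $B\cap B'\ne\emptyset$, the triangle inequality gives $B\subset 5B'$, which is the desired inclusion.

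It remains to handle countability, and this is where compactness of $X$ is used. The selected family $\{B_i\}$ consists of pairwise disjoint open balls; for each $\eps>0$ the subfamily of balls with $\diam B_i>\eps$ is finite because a compact metric space admits only finitely many pairwise disjoint sets of diameter bounded below (any accumulation point of their centers would force two of them to intersect). Taking $\eps=R/2^{n+1}$ over all $n$ yields countability.

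The only genuinely delicate point is the maximality step: one must be sure that at generation $n$ a maximal disjoint enlargement really exists, which is a routine application of Zorn's lemma to the poset of disjoint subfamilies of $\calB_n$ that avoid $\calD_0\cup\cdots\cup\calD_{n-1}$. Nothing else in the argument depends on the metric beyond the triangle inequality and compactness.
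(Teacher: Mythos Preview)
The paper does not give a proof of this lemma; it is simply quoted as ``the following well known result'' and then used in the proof of Lemma~\ref{dmod=0}. So there is nothing to compare your argument against.

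Your argument is the standard Vitali-type greedy selection and is correct. One small remark: you sort by diameter rather than radius, which at first glance looks dangerous in a general metric space (where the radius of a ball need not be comparable to its diameter). However, your inclusion step is fine as written: from $\diam B\le 2\diam B'$ and $\diam B'\le 2r'$ you get $\diam B\le 4r'$, and then for any $x\in B$ and any $z\in B\cap B'$,
\[
d(x,c')\le d(x,z)+d(z,c')\le \diam B + r' \le 5r',
\]
so indeed $B\subset 5B'$. The countability argument via compactness is also fine, since $\diam B_i>\eps$ forces $r_i>\eps/2$, and an accumulation point of centers then produces an intersection. The only (harmless) edge case you have not addressed is balls of diameter $0$, i.e.\ singletons, which fall outside every $\calB_n$; these can simply be appended to the disjoint family if not already covered.
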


The space $X$ is said to be \textit{doubling} if there is a constant $C$ such
that every ball in $X$ may be covered by  $C$ balls of half the radius.

Throughout the paper, the term ``measure'' refers to a Borel regular outer
measure.  A measure $\lambda$ on $X$ is \textit{locally finite} if  every
$x\in X$ lies in a neighborhood $U\subset X$ with $\lambda(U)<\infty$.
$\lambda$ is \textit{doubling} if there is a constant $C$ such that for each
ball $B\subseteq X$, $\lambda(2B)\leq C\lambda(B)$.

We are particularly interested in (generalized) Hausdorff measures, whose
definition we now recall.
\begin{definition}
\label{def:hmeas} Given a non-negative function $h:[0,\infty)\to[0,\infty)$,
and a subset $E\subseteq X$, the Hausdorff $h$-measure of $E$ is defined as
follows. For every $\eps\in (0,\infty]$, let
\begin{equation*}
\calH_{\eps}^{h}(E)=\inf{ \left\{
\sum_{i=1}^{\infty}h(r_i):
E\subset\bigcup_{i=1}^{\infty}B(x_i, r_i), \, r_i<\varepsilon
\right\} },
\end{equation*}
and $$\calH^{h}(E)=\lim_{\eps\to0} \calH^h_{\eps}(E).$$
\end{definition}
When $h(r)=r^t, t\geq0$, the resulting measure is called the
\textit{$t$-dimensional Hausdorff measure} and is denoted by $\calH^t$.
 %It is easy to see that if $\calH_t(E)<\infty$ then $\calH_{s}(E)=0$ for $t<s$. Therefore it is natural to define
 The
\textit{Hausdorff dimension} of $X$ is
\begin{eqnarray*}
\dim_H(X)=\inf\{\,t\,: \calH^t(X)=0\}.
\end{eqnarray*}

A subset $E\subset X$ is \textit{\textsf{Ahlfors $d$-regular}}, if there is a
constant $C_E\geq 1$ such that for every $x\in E$ and $0<r<\diam E$ the
following inequalities hold
\begin{equation}\label{ineq:Ahlfors}
  \frac{1}{C_E} r^d \leq \calH^d(E\cap B(x,r))\leq C_E r^d.
\end{equation}
We will denote by $\calA_d(X)$ the collection of all bounded Ahlfors
$d$-regular subsets of $X$, that is for every $E\in\calA_d(X)$  the
inequalities (\ref{ineq:Ahlfors}) hold for some constant $C_E\geq1$.

We will denote the support of a measure $\lambda$ on $X$ by $E_\lambda$.  We
say that $\lambda$ is  \textit{\textsf{Ahlfors $d$-regular}}, $d>0$, if there are
constants $C_\lambda\geq 1$ and $r_\lambda>0$ such that
\begin{equation}\label{ineq:p-regular}
C^{-1}_{\lambda} r^{d} \leq \lambda(B(x,r))\leq C_{\lambda} r^{d},
\end{equation}
for every ball $B(x,r)$ centered at $x\in E_\lambda$ with radius
$r<r_\lambda$. More generally, we say $\lambda$ is \textsf{\emph{upper or
lower $d$-regular}} if only the right or left inequality in
(\ref{ineq:p-regular}) holds, respectively.  We denote by  $\calL_d(X)$ the
family of lower $d$-regular measures in $X$.

\begin{remark}
\label{rem:regularityismetric} If $\lambda$ is $d$-regular, then so is the
restricted Hausdorff measure $\calH^d\lfloor_{E_\lambda}$  \cite[Exercise
8.11]{H}, so that if we were only interested in the two-sided regularity
condition, there would be no special reason to consider  $d$-regular measures
rather than sets.  On the other hand, by itself, mere upper (resp.\ lower)
regularity of $\lambda$ does not imply the same condition for
$\calH^d\lfloor_{E_\lambda}$. Whereas the full two-sided Ahlfors regularity
condition is rather strong, the existence of upper and lower regular measures
holds in rather great generality ---  the former may be obtained via the
Frostman Lemma (Lemma \ref{lem:frostman} below), and the latter exist on
compact doubling metric spaces, via a theorem of Vol'berg and Konyagin
\cite{VolbergKonyagin}.\footnote{Note that the results in
\cite{VolbergKonyagin} are formulated in terms of ``homogeneous'' measures,
but on a bounded set such measures are easily seen to be lower regular.

%We also caution the reader that though Luukkainen and Saksman extended the Volberg-Konyagin result to complete spaces \cite{LuukkainenSaksman}, the definition of homogeneity need not imply lower-regularity of the measure in that generality.
} %(See Remark \ref{doublingmetricvsmeasure} below for further discussion of doubling metrics and measures).

The upper regular measures given by the Frostman Lemma are crucial to our
applications to product spaces in Theorem \ref{expandsmetricquantitative} and
Corollary \ref{expandsmetric}. Also, the arc-length measure of a curve
satisfies the lower regularity condition, though not necessarily the upper,
so that in order to view our results as a generalization of facts about curve
modulus, we must consider lower regular measures. It is for these reasons
that our most general results, Theorem \ref{theorem:expansion} and Corollary
\ref{cor:expansion}, are formulated in terms of families of measures, not
sets.
\end{remark} %We will sometimes abuse notation and say things like ``$\lambda_E$ is say $\lambda_E$ When speaking of a typical (upper,lower) $d$-regular measure $\lambda$, we will often implicitly assume

%We will denote by $\mathcal{U}_D(X)$ the collection of upper $D$-regular measures on $X$.\footnote{do we really use this notation for anything?}

%Thus,
%\begin{equation}
%\calA_d(X) := \left\{ E \subset X | \frac{r^d}{C_E} \leq \calH_d(E\cap B(x,r))\leq C_E r^d,  \forall x\in E, \forall 0<r<\diam E<\infty \right\}.
%\end{equation}

%One usually gives an upper bound for the Hausdorff dimension of a
%set by finding explicit covers for it. Lower bounds can be obtained
%by finding an appropriate measure on $X$, via the following well-known lemma.
We refer to \cite{Mattila} and \cite{H} for proofs of the next two lemmas and
for further discussion of Hausdorff measures, dimension and Ahlfors regular
spaces and their properties.

\begin{lemma}[Mass distribution principle]\label{masslemma}
If the metric space $X$ supports a positive upper $D$-regular Borel measure,
then $\calH^D(X)>0$.  In particular, $\dim_H(X)\geq{D}$.
\end{lemma}

An important converse to the mass distribution principle is  the following
lemma, see \cite[Theorem $8.8$]{Mattila}.

\begin{lemma}[Frostman's Lemma]
\label{lem:frostman}
If $X$ is a doubling metric space, and $F\subseteq X$ is a Borel set such that $\calH^s(F)>0$, then there is an upper $s$-regular  measure $\nu$ supported on $F$ such that $\nu(F)>0$. %and
%\begin{equation}\label{ineq:uperregular}
%\mu(B(x,r))\leq C_{\mu} r^{s}, \mbox{ for every ball } B(x,r)\subset \mathbb{R}^N.
%\end{equation}
\end{lemma}

%We will say that a measure $\mu$ is \textsf{\emph{upper $p$-regular}} if it satisfies (\ref{ineq:uperregular}).
\begin{remark}
Frostman's Lemma is often stated for the special case $X=\mathbb R^n$.
However, even if $(X,d_X)$ is only a doubling metric space, the lemma is easily obtained from the Euclidean case via Assouad's embedding theorem \cite{Assouad}. For simplicity we will denote by $X^t$ the metric space $(X,d_X^t)$ for $t\in(0,1)$, i.e. the $t$-snowflaked version of $X$. Now, if $\calH^s(F)>0$ for some $F\subset X$ then
by Assouad's embedding theorem \cite{Assouad} for every $s'>s$ there is a bi-Lipschitz embedding $j:X^{s/s'}\hookrightarrow\mathbb{R}^n$ for some $n\in\mathbb{N}$. Since $j$ is bi-Lipschitz, we have
$$\calH^{s'}(j(F^{s/s'}))\cong\calH^{s'}(F^{s/s'}) = \calH^s(F)>0,$$
where $F=(F,d_X|_F)$. Assuming Frostman's Lemma for $\mathbb{R}^n$, we have that  $j(F^{s/s'})\subset\mathbb{R}^n$  supports a nontrivial, upper $s'$-regular measure. Since $j$ is bi-Lipschitz, the space $F^{s/s'}$ also supports an upper $s'$-regular measure $\mu$ such that $\mu(F^{s/s'})>0$. Then, if $\nu$ is defined on $F$ as the pullback of $\mu$ under the snowflaking, i.e. $\nu(E)=\mu(E^{s/s'})$ for every Borel set $E\subset F$, then for every ball $B_X(x,r)\subset X$ we have
\begin{align*}
  B_X(x,r) & = \{y \in X : d_X(x,y)<r \}  = \{y \in X : (d_X(x,y))^{s/s'}<r^{s/s'} \} \\
 & = \{y \in X^{s/s'} : d_{X^{s/s'}}(x,y) < r^{s/s'} \} = B_{X^{s/s'}}(x,r^{s/s'}),
\end{align*}
and therefore
\begin{align*}
  \nu(B_X(x,r))
  = \mu(B_{X^{s/s'}}(x,r^{s/s'}))
  \leq C(r^{s/s'})^{s'} = Cr^s.
\end{align*}
Thus $F$ supports an upper $s$-regular measure $\nu$, such that $\nu(F)=\mu(F^{s/s'})>0$.
\end{remark}

%where $\calA^+_p(X)$ is the collection of all  upper $p$-regular measures on $X$.

%\begin{definition}[Ahlfors regular families of measures]
%For $d>0$ we say that a family of measures $\calL_{\calE}=\{\la_E\}_{E\in\calE}$ associated to a collection of subsets $\calE=\{E\}$ of $X$ is \textit{\textsf{(uniformly) Ahlfors $d$-regular}} if there exist positive constants $r_0>0$ and $C_{\calL}>0$ which are independent of $x\in E$ and $E\in\mathcal{E}$ such that
%\begin{equation}\label{ineq:regularfamilies}
%C_{\calL}^{-1} r^{d}\leq\lambda_E(B(x,r))\leq  C_{\calL} r^{d}, \quad \forall x\in E
%\end{equation}
%for every $r<r_0$. Similarly, we will say that $\calL$ is \textsf{\emph{upper or lower $d$-regular}} if only the right or left inequality in (\ref{ineq:regularfamilies}) holds, respectively.
%\end{definition}

%\subsection{Ahlfors regular sets and families}

\subsection{Quasiconformal and quasisymmetric mappings}
Given a homeomorphism $f:X\to Y$, $x\in X$ and $r>0$ we let
\begin{eqnarray*}
  H_f(x,r)=\displaystyle{\frac{{\sup_{d_X(x,y)\leq r}} d_Y(f(x),f(y))}{\inf_{d_X(x,y)\geq r} d_Y(f(x),f(y))}}.
\end{eqnarray*}
The mapping $f$ is called (metrically) \textit{quasiconformal} if there is a
constant $H<\infty$ such that
\begin{equation}\label{definition:metricQC}
  \limsup_{r\to0}H_f(x,r) \leq H
\end{equation}
for every $x\in X$.

Because quasiconformality is an infinitesimal property it is often hard to
work with directly. For this reason one often requires a stronger, global
condition from a mapping $f$, which we discuss next.

Let $\eta:[0,\infty)\rightarrow[0,\infty)$ be a fixed homeomorphism.  A
homeomorphism $f$  between metric spaces $(X,d_X)$ and $(Y,d_Y)$ is called
$\eta$-\textit{quasisymmetric} if for all distinct triples $x,y,z\in{X}$ we
have
\begin{equation}\label{QS}
\frac{d_Y(f(x),f(y))}{d_Y(f(y),f(z))}\leq{\eta\left(\frac{d_X(x,y)}{d_X(y,z)}\right)}.
\end{equation}

Quasisymmetric mappings do not distort (macroscopic) shapes too much. In
particular an image of a round ball will be ``roundish", a condition which a
priori holds for QC maps only on small scales depending on $x$.

A mapping $f$ of a metric space $(X,d_X)$ into $(Y,d_Y)$ is called a
\emph{quasisymmetric embedding} if $f$ is a quasisymmetric map of $X$ onto
$f(X)\subseteq Y$, where the metric on $f(X)$ is the restriction of the
metric of $Y$.

%More precisely
%there is a constant $H\geq1$ such that for every ball $B=B(x,R)\subset X$ there is a ball $B'=B(f(x),r)$ so that the following holds
%\begin{equation}
% \frac{1}{H} \cdot B' \subset f(B) \subset H \cdot B',
%\end{equation}
%where we may take $H=\eta(1)$.
%We will need the following lemma, which can be intuitively thought of as a sort of quasi-preservation of annuli.

It follows almost immediately from the definition that quasisymmetric maps do not distort annuli too much.  More precisely, we have the following easy result, which is very similar to Lemma 3.1 in \cite{Tyson:qcqs}, and
which we will use in the proof of Theorem \ref{theorem:expansion} below.
\begin{lemma}\label{lemma:qsdistortion}
If $f$ is an $\eta$-quasisymmetric map of a separable metric space $X$, then
for every closed ball $B'=B(y,s)\subset f(X)$, with $y=f(x)$, there is a
closed ball $B=B(x,r)$ such that for each $k\geq 1$, the following inclusions
hold:
\begin{equation*}
B'\subseteq f(B)\subseteq f(kB)\subseteq \eta(k)B'\text{.}
\end{equation*}
Furthermore, if $f^{-1}$ is uniformly continuous, with modulus of continuity
$\omega(t)$, then we may choose $r$ so that $r\leq\omega(s)$.
\end{lemma}

\begin{proof}
Let $r>0$ be the smallest number such that the first inclusion holds.  Then
for each $\alpha<1$, and a point $x_\alpha\in f^{-1}(B')\backslash \alpha B$,
we let $y_\alpha=f(x_\alpha)\in B'\backslash f(\alpha B)$. Therefore,
whenever $y_1=f(x_1)$, with $x_1\in kB$, we have
\[
d(y,y_1)\leq  d(y,y_\alpha) \eta\left(\frac{d(x,x_1)}{d(x,x_\alpha)}\right) \leq s \eta\left(\frac{k}{\alpha }\right)\text{.}
\]
Passing to the limit as $\alpha$ goes to $1$, we see that $d(y,y_1)\leq
s\eta(k)$, from which the last inclusion follows.  Finally,  when $f^{-1}$
has modulus of continuity $\omega$, the choice of $r$ implies  $r\leq
\omega(s)$.
\end{proof}

%As a consequence of  Lemma \ref{lemma:qsdistortion}, we see that for a
%quasisymmetric map  $f\colon X \rightarrow Y$, and $N\in\mathbb N$, every
%ball $B'\subseteq  Y$ is contained in the image of a union of at most $2^m$
%$N$-adic cubes $Q_i$, each satisfying $\diam(f(Q_i)) \approx \diam(B')$.

As a consequence of  Lemma \ref{lemma:qsdistortion} we obtain the following
version of the mass distribution principle where we require an upper estimate
of $\mu$ only for a limited collection of subsets.

\begin{lemma}\label{upgradedmasslemma}
%Suppose $X$ is a metric space such that for some $k\geq 1 $ there is a
%collection $\mathcal Q = \{Q_i\}_{i\in\mathbb{N}}$ of $k$-round sets such
%that for every ball $B=B(x,r)$ there are $N$ sets $Q_1,\ldots,Q_N$ such that
%\begin{align}
%B&\subset \bigcup_{i=1}^N Q_i, \mbox{ and }\\
%C_1^{-1}&\leq \frac{\diam Q_i}{\diam B} \leq C_1, i=1,\ldots,N,
%\end{align}
%where $C_1$
 Let $f$ be a quasisymmetric map of a metric space $X$.  If there exist
constants $C_1,C_2\geq 1$, an integer $L\in\mathbb{N}$ and a measure $\mu$ on
$f(X)$ such that
\begin{itemize}
  \item there is a collection of sets $\mathcal{Q} =
      \{Q_i\}_{i\in\mathbb{N}}$ s.t. every ball $B=B(x,r)$ can be covered
      by $L$ members of $\mathcal{Q}$, $Q_{i_1},\ldots,Q_{i_L}$, so that
      for $k=1,\ldots,L$ we have
      $${\diam Q_{i_k}} \leq C_1{r},$$
  \item for every $i\geq 1$ we have $\mu(f(Q_i))\leq C_2 \diam(f(Q_i))^s$,
      for some $s>0$,
\end{itemize}
then $\dim_H f(X)\geq s$.
\end{lemma}

\begin{proof}
 Let $B'=B(f(x),R)$. We want to show that $\mu(B')\lesssim R^s$. Let $B$ be
the ball containing $f^{-1}(B')$ given by Lemma \ref{lemma:qsdistortion}.
Since $Q_{i_k}\subset (1+C_1)B$, from Lemma \ref{lemma:qsdistortion} we have
for $k=1,\ldots, L$
$$f(Q_{i_k})\subset f((1+C_1) B) \subset \eta(1+C_1) B'.$$
Therefore
 \begin{align*}
   \mu(B')
   &\leq \mu(f(B)) \leq \mu (\bigcup_{k=1}^L f(Q_{i_k}))\leq \sum_{k=1}^L \mu(f(Q_{i_k}))\\
   &\leq C_2 \sum_{k=1}^L (\diam f(Q_{i_k}))^s
   \leq C_2 \sum_{k=1}^L (\eta(1+C_1)\cdot2R)^s
   = C R^s,
 \end{align*}
where $C=C_2 L [2\eta(1+C_1)]^s$. Applying the mass distribution principle
completes the proof.
\end{proof}

%\begin{lemma}\label{upgradedmasslemma}
%Let $N\in\mathbb N$, let $f\colon [0,1]^m\rightarrow Y$ be quasisymmetric,
%and suppose that for some $E\subseteq [0,1]^m$, $f(E)$ supports a positive
%Borel measure $\mu$, such that for every $N$-adic cube $Q\subseteq [0,1]^m$,
%$\mu(f(E\cap Q))\leq C\diam(f(Q))^D$.  Then $\calH^D(f(E))>0$.  In
%particular, $\dim_H(f(E))\geq D$.
%\end{lemma}
%
%\begin{proof}
%By the previous discussion, $\mu$ is upper $D$-regular, and so we may apply
%Lemma \ref{masslemma}.
%\end{proof}

Even though quasisymmetry is a stronger condition than quasiconformality, it
is often the case that the two notions coincide. For instance if a
homeomorphism $f:\mathbb{R}^N\to\mathbb{R}^N, N\geq 2$ is QC then it is also
QS. This was proved by Gehring for $N=2$ in \cite{Gehring:definitions} and by
V\"ais\"al\"a for $N\geq3$ \cite{Vaisala:definitions}. More recently Heinonen
and Koskela extended this equivalence to a large class of metric spaces
\cite{HK:Acta},\cite{HK}.

\section{Modulus}\label{section:moduli}
The main tool used in this paper is the modulus of a family of curves, sets,
or measures. In this section we review the definitions and basic properties
for each of these concepts. In the case when the underlying measure space is
a locally compact topological group $G$, and $\la$ is any measure on $G$, we
estimate from below the modulus of the family of translates of $\la$ by
elements $g\in K$, where $K$ is any subset of $G$. This estimate is vital for
the proof of Theorem \ref{thm:carnot}.

\subsection{Modulus of curve families} Given a metric measure space $(X,\mu)$, a family of curves $\G$ in $X$
and a real number $p\geq1$ the $p$-\emph{modulus} of $\G$ is defined as
$$\m_p \G = \inf_{\rho} \int_X \rho^p d\mu,$$
where the infimum is taken over all $\G$-admissible nonnegative Borel
functions $\rho$.  Here a function $\rho:X\to[0,\infty)$ is
\emph{$\G$-admissible} if $\int_{\g}\rho d s\geq 1$ for every locally
rectifiable curve $\gamma\in\G$, where $ds$ denotes the arclength element. We
say that a property holds for \emph{$p$-almost every curve} in $X$ if it fails
only for a curve family $\G$ such that $\m_{p}\G=0$. Notice that by
definition, almost every curve is locally rectifiable. We refer to ~\cite{H}
and ~\cite{HK:Acta} for further details on modulus of curve families
including the definitions of rectifiability and arclength in general metric
spaces.

Despite superficial differences,  proofs of the aforementioned equivalence
between the definitions of metric quasiconformality and  quasisymmetry, no
matter the level of generality, tend to follow the same broad outline:  the
metric definition is used, with the help of various covering arguments, to
establish quasi-invariance of the conformal modulus of path families, and
this invariance, along with modulus estimates for certain families,
facilitates geometric arguments that yield the global distortion estimate
\eqref{QS}.

As a result, one obtains the following equivalent definition of
quasiconformality, the so-called  ``geometric" definition.

Given $D>1$ and $K\geq 1$, a homeomorphism $f\colon X\to Y$ between Ahlfors
$D$-regular spaces is called \textit{(geometrically) $K$-quasiconformal} if
for every family of curves $\G$ in $X$ the following inequalities hold
\begin{eqnarray}
  {K^{-1}}\m_D f(\G) \leq \m_D \G \leq K \m_D f(\G),
\end{eqnarray}
where $f(\G)$ denotes the image of the family $\G$ under $f$,  see
\cite{Ahlfors:QCmaps},\cite{Vaisala:lectures}.

For proofs of the equivalence of geometric quasiconformality to the metric
definition, and to quasisymmetry, in various levels of generality, we refer
the reader again to \cite{Gehring:definitions, HK:Acta, HK,
Vaisala:definitions}.

%In Section \ref{section:unrectifiable} we will use the following standard fact about conformal mappings of the plane: Given a Jordan domain $Q\subset \mathbb C$ and four distinct points $a,b,c,d\in\partial Q$ there is a conformal map $f$ which maps $Q$ onto a rectangle with sides parallel to the coordinate axes and takes $a,b,c,d$ to the vertices of $f(Q)$. The mapping $f$ becomes unique if one specifies the length of one of the sides of $f(Q)$. This is an easy corollary of the Riemann mapping theorem.

%The modulus of a family of Ahlfors regular arcsStrictly speaking, the modulus of a family of curves cannot be$1$-dimensional sets

\subsection{Modulus of families of measures}
The notion of modulus can be extended far beyond the context of curve
families.  The modulus of a family of measures, with respect to an underlying
measure, was defined and studied by Fuglede in \cite{Fug} and by Ziemer in
\cite{Ziemer}. In \cite{Hak:IMRN} the second author used Fuglede's modulus to
study conformal dimension of various spaces. More recently, Badger studied
extremal metrics and Beurling criterion for families of measures
\cite{Badger:Fuglede}.

Let $(X,\mu)$  be a metric measure space and $p\geq 1$. Let $\calL$ be a
collection of measures on $X$.  A Borel  function $\rho:X\to [0,\infty]$ is
said to be admissible for $\calL$ if
$$\int_X \rho d\la\geq 1$$
for every $\la\in\calL$. The \emph{$p$-modulus of $\calL$} is
$$\m_p(\calL,\mu)=\inf_{\rho- \mbox{\tiny{adm}}}\int_X\rho^p d\mu,$$
where the infimum is taken over all $\calL$-admissible functions $\rho$.
Often we simply write $\m_p(\calL)$, when $\mu$ is clear from context.

Next, we summarize some of the properties of modulus that will be useful for
us.

\begin{lemma}\label{lemma:modulus-properties}
For every $p\geq 1$ the following properties hold.
  \begin{enumerate}
    \item $\mathrm{\textsc{(Monotonicity)}}$ $\m_p\calL \leq \m_p\calL'$,
        if $\calL\subset\calL'$
    \item $\mathrm{\textsc{(Subadditivity)}}$ $\m_p\calL \leq
        \sum_i\m_p\calL_i$, if $\calL=\bigcup_{i=1}^{\infty}\calL_i$
    \item $\mathrm{\textsc{(Ziemer's Lemma)}}$  If $1<p<\infty$, $\calL_1\subset\calL_2\subset\ldots$ are families of %complete
    measures and $\calL=\cup_{i=1}^{\infty}\calL_i$ then $\m_p \calL =
    \lim_{i\to\infty} \m_p \calL_i.$
  \end{enumerate}
\end{lemma}

See \cite{Fug} for (1) and (2). Property (3) is due to Ziemer for families of
continua  in $\mathbb{R}^N$, see Lemma $2.3$ in \cite{Ziemer}.  The  proof in
the case of general measure families is the same as in \cite{Ziemer}. It is
important to emphasize here that Ziemer's Lemma holds only under  the
assumption $p>1$.

We say a property holds for \textsf{\emph{$p$-almost every
$\lambda\in\calL$}} if it fails only for a family $\calL_0\subset\calL$ such
that $\m_{p}(\calL_0)=0$.

\subsection{Modulus of families of Ahlfors regular sets}\label{section:conformal}
The notion of modulus defined above is quite general, but also a bit
technical and abstract, and presumes we have at hand not merely a single
underlying measure, but a family of other measures as well. In the greatest
generality, this complication is unavoidable --- we cannot speak of the
modulus of a family of sets without some measures on hand to formulate the
admissibility condition. Our motivation for working in such generality was
discussed earlier in Remark \ref{rem:regularityismetric}.
% There is no useful notion of an ``upper'' or ``lower'' $d$-regular set.

Ahlfors regularity, on the other hand, is fundamentally a metric notion, in
the sense that the existence of any Ahlfors $d$-regular measure on a set $E$
is equivalent to $d$-regularity of the Hausdorff measure
$\calH^d\lfloor_{E}$, and the latter property is determined entirely by the
metric. With this in mind, given any family  $\calE\subseteq \mathcal A_d(X)$
of Ahlfors $d$-regular sets, we define the $p$-modulus of $\calE$ to be
$$\m_{p}(\calE,\mu) = \m_{p} ( \{ \calH^{d}\lfloor_{E} \}_{E\in\calE},\mu)\text{.}$$
As before, when $\mu$ is clear from context, we omit it.  In particular, we
do this when $X$ is $D$-regular and $\mu=\calH^D$.  The dimension $d$ will
always be clear from context, so we do not include it in the notation either,
and in any case, a given set can be Ahlfors $d$-regular for at most one value
of $d$.

Finally, the most important modulus in (quasi)-conformal geometry, when
considering $d$-dimensional subsets of $D$-dimensional spaces, is the
conformal modulus $\m_{D/d}$.  Thus when $\calE\subseteq \mathcal A_d(X)$,
and $X$ is Ahlfors $D$-regular, we unambiguously define $\m(\calE):=
\m_{{D}/{d}}(\calE)$, and simply say ``almost every $E\in \calE$'' to refer
to a property that holds for all $E\in\calE\backslash \calE_0$, for some
subfamily $\calE_0$ with $\m(\calE_0)=0$.

\subsection{Modulus and products}\label{subsection:products}
Let $(E,\la)$ and $(F,\nu)$ be two metric measure spaces with
$0<\la(E)<\infty$. Denote $X=E\times F$ and $\mu=\la\times \nu$. Let
$\mathcal{F}=\{E\times\{y\}: y\in F\}$, and for each $y\in F$, let $\la_y$ be
the pushforward of $\la$ by the map $x\mapsto (x,y)$, so that for each Borel
set $A\subseteq X$,  $\la_y(A)=\la(\{x\in E:(x,y)\in A\})$.

 \begin{lemma}\label{lemma:product modulus}
 With the notation as above,  let
$\calL_F=\{ \la_{y}: y\in F\}$.  Then for every $p\geq 1$ we have
   \begin{equation}
     \m_{p} \calL_F = \frac{\mu(X)}{\la(E)^p} = \frac{\nu(F)}{\la(E)^{p-1}}.
   \end{equation}
 \end{lemma}

\begin{proof}
This proof is the same as in the classical case of curve families.  We give
it here for completeness.
First note that since the function $\rho(x,y)\equiv\la(E)^{-1}$ is admissible
for $\calL_F$, we have $\m_{p} \calL_F \leq \frac{\mu(X)}{\la(E)^p}.$
To obtain the lower bound, note that for every $\calL_F$-admissible $\rho$ we
have $\int_E \rho(x,y) d\la\geq1, \forall y\in{Y},$ and therefore by
H\"older's inequality we obtain that for every $y\in Y$ the following holds
$$1\leq \la(E)^{p-1} \int_{E}\rho^{p}(x,y)d\la.$$
Integrating both sides of this inequality with respect to $\nu$ we obtain
$$\nu(F)\leq \la(E)^{p-1} \int_{X}\rho^{p}(x,y) d\mu,$$
and therefore
$$\frac{\mu(X)}{\la(E)^p} = \frac{\nu(F)\la(E)}{\la(E)^{p}}\leq \int_{X}\rho^{p}(x,y) d\mu.$$
Hence $\m_{p}\calL_F\geq\frac{\mu(X)}{\la(E)^p}$.
\end{proof}

\subsection{Modulus and group translations} In this subsection we consider
another example of a family of measures - a family of translates of a given
measure $\lambda$ by elements of a set $K\subset G$, where $G$ is a
topological group, which in particular could be $\mathbb{R}^n$. We will first
show that the modulus of the family of translates of $\la$ depends on the
measure of $K$, and then will consider an example of translates of the Cantor
set in the real line $\mathbb{R}$.

 Suppose $G=(G,\nu)$ is a locally
compact topological group, with right invariant Haar measure $\nu$.  Let
$\lambda$ be another measure on $G$. For each $y\in G$, denote by
$y_*\lambda$ the pushforward of $\lambda$ by \textit{left} multiplication by
$y$.

If $K\subseteq G$ is measurable, let $K_*\lambda=\{y_*\lambda:y\in K\}$.

\begin{lemma}\label{lem:translatesmodulus}
For every $p\geq 1$ we have
\begin{equation}\label{ineq:main}
\mod_{p}(K_*\lambda,\nu)
\geq
\frac{\nu(K)}{\la(G)^{p}}.
\end{equation}
%In particular, if the family of measures a property holds for $p$-almost every measure
%$\psi_*\lambda_y$, then it holds for $\nu$-almost every $y\in F$.
\end{lemma}

\begin{proof}[Proof of Lemma \ref{lem:translatesmodulus}]
Let $\rho\colon G\to [0,\infty]$ be admissible for the family $K_*\lambda$,
and fix $y\in K$. Since translation by $y$ maps $G$ to itself, we have
$y_*\lambda(G)=\lambda(G)$. Thus by H\"older's inequality,
\[
\int_G\rho(yx)^p\, d\lambda(x)=
\int_G\rho^p\, dy_*\lambda
\geq \frac{\left(\int_G\rho\, dy_*\lambda\right)^p}{\lambda(G)^{p-1}}
\geq \frac{1}{\lambda(G)^{p-1}}\text{.}
\]
Thus we obtain
%Integrating with respect to the right Haar measure $\nu$ we have
\begin{align*}
 \lambda(G)\int_G\rho^p\,d\nu
&=\int_G\int_G\rho(y)^p\,d\nu(y)\,d\lambda(x)
=\int_G\int_G\rho(yx)^p\,d\nu(y)\, d\lambda(x)& \text{(right invariance of $\nu$)}\\
&=\int_G\int_G\rho(yx)^p\, d\lambda(x)\,d\nu(y)&\text{(Fubini's Theorem)}\\
&\geq \int_K\int_G\rho(yx)^p\, d\lambda(x)\,d\nu(y) \\
&\geq \int_K \left(\frac{1}{\lambda(G)^{p-1}}\right)\,d\nu=\frac{\nu(K)}{\lambda(G)^{p-1}}\text{.}
\end{align*}
Dividing each side by $\lambda(G)$ and infimizing over all admissible
functions $\rho$, we obtain the desired inequality \eqref{ineq:main}.
\end{proof}

Using the terminology of Subsection \ref{section:conformal} we have the
following consequence of Lemma  \ref{lem:translatesmodulus}.

\begin{corollary}\label{cor:translates}
Let $n\geq 1$, $0<d<n$. If $E$ is a nonempty, bounded Ahlfors $d$-regular
subset of $\mathbb{R}^n$ and $K\subseteq \mathbb{R}^n$ is a Lebesgue
measurable set ``of translates" of $E$, then the family of translates $\{y+E
: y\in K\}$ has positive $p$-modulus, for any $p\geq 1$, whenever $K$ has
positive $n$-dimensional Lebesgue measure. More precisely, if $\calH^n(K)>0$
then
\begin{align*}
 \m_p (\{\calH^d \lfloor_{\, y+E} \, : \, y\in K \}, \calH^n) >0,
\end{align*}
for every $p\geq 1.$
\end{corollary}

\begin{proof}
Let $(G,\nu)=(\mathbb{R}^n,\calH^n)$ and $\la=\calH^d \lfloor_{E}$. Then
$0<\la(G)=\calH^d(E)<\infty$ since $E$ is a nonempty, bounded Ahlfors regular
set. Moreover $y_*\la = \calH^d \lfloor_{\,y+E}$. The result follows
immediately from inequality (\ref{ineq:main}).
\end{proof}

\begin{remark}
Note that the converse of Corollary \ref{cor:translates} is not true; it is
possible to have a set $K\subset\mathbb{R}^2$ of zero Lebesgue measure such
that the family $\{y+E : y\in K\}$ has positive modulus. Indeed, if
$E=[0,1]$, $\la=\calH^1\lfloor_{\,[0,1]}$ and $K$ is the vertical segment of length one, connecting the origin to the point $(0,1)\in\mathbb{R}^2$, then the family of
translates $K_* {\la}$ coincides with the product family $\calL_{[0,1]}$ as
in Lemma \ref{lemma:product modulus}, i.e. with the family of restrictions of
the one-dimensional Lebesgue measure to the horizontal segments
$[0,1]\times\{y\},$  with $y\in[0,1]$. But $\m_p(\calL_{[0,1]},\calH^2) =1 $
for every $p\geq 1$, by Lemma \ref{lemma:product modulus}, even though
$\calH^2(K) = 0.$
\end{remark}

\subsection{Modulus and Minkowski sum} From the previous remark it follows that the positivity of modulus of a
family of translates $K_*\la$ of a measure $\la$ is not characterized by the
measure $\nu(K)$ of the set of translates. However, the example in that
remark may lead the reader to think that one may be able to characterize the
positivity of modulus (at least in $\mathbb{R}^n$) in terms of the measure of
the Minkowski sum $K+E$, i.e. the union of the supports of $y_*\la$ as $y$
runs through $K$. We will show that this is also not true. For this we will
consider the middle thirds Cantor set $C\subset [0,1]$ and the Bernoulli
probability measure $\la$ supported on $C$, and will show that there are two
sets of translates $K_1,K_2\subseteq[0,1]$ and $p\geq 1$ such that
$$\m_p({K_1}_*\la,\calH^1)>0 \quad \mbox{ and } \quad
\m_p({K_2}_*\la,\calH^1) = 0$$ even though
$$K_1+C = K_2+C.$$

Let $K_1 = [0,1]$ then clearly $K_1+C = [0,2]$.
%\begin{align*}
%\calH^1(K_1+C) =\calH^1( \{y+C : y\in[0,1]\})  = \calH^1([0,2]) =2>0.
%\end{align*}
Moreover, by Lemma \ref{lem:translatesmodulus} for every $p\geq 1$ we have
\begin{align*}
  \m_p({K_1}_*\la,\calH^1) \geq \frac{\calH^1([0,1])}{\la(C)^{p-1}}=1>0.
\end{align*}

Let $K_2 = C$. It is well known that $C+C = [0,2]$, see e.g. \cite{Randolph}.
Next we show that $\m_p({K_2}_*\la,\calH^1) = 0$ for some values of $p$.

\begin{lemma}
Let $C\subset I:=[0,1]$ be the middle-thirds Cantor set and $\la$ be the
Bernoulli probability measure on $C$. Then for $1\leq p < \log_2 3$ we have
\begin{align}
  \m_p({C}_*\la,\calH^1) &= 0.
\end{align}
\end{lemma}
\begin{proof}
To find $\m_p({C}_*\la,\calH^1)$ we let
\begin{align*}
\rho_i = 2^i\chi_{[1-\frac{1}{3^{i}},1+\frac{1}{3^{i}}]}
\end{align*}
for every $i\geq 1$. We will show $\rho_i$ is admissible for $C_*\la$ for
every $i\geq 1$. To see that, fix a point $y\in C$ and consider the measure
$y_* \la$ supported on $y+C$. Note, that $1\in y+C$, since $1-y\in C$ if
$y\in C$. Next for $i\geq 1$ let $J_i$ denote the $i$th generation interval of length $3^{-i}$ used in the standard construction of the Cantor set, which contains the point $1-y$. Then, by definition
of $\la$ we have that $\la(J_i)=2^{-i}$. Moreover, since  $y+J_i$ contains the point
$1\in\mathbb{R}$ we also have
$$y+J_i\subset\left[1-\frac{1}{3^i},1+\frac{1}{3^i}\right].$$
Thus,
\begin{align*}
  \int \rho_i d(y_*\la)
  &= 2^i (y_* \la)([1-\frac{1}{3^i},1+\frac{1}{3^i}])
  \geq 2^i (y_* \la) (y+J_i)\\
  &\geq 2^i \la(J_i) = 2^i 2^{-i} =1,
\end{align*}
where we used the fact that $\la(J_i)=2^{-i}$ on every $i$'th generation
interval $J_i$ of the Cantor set $C$. Thus $\rho_i$ is admissible for  $C_*
\la$ for every $i\geq 1$, and we estimate the modulus of $C_*\la$ as follows
\begin{align*}
\m_p({C}_*\la,\calH^1)
& \leq \int_{C+C} \rho_i^{p} d\calH^1  \leq \int_0^2 \left(2^i\chi_{[1-\frac{1}{3^{i}},1+\frac{1}{3^{i}}]}\right)^p d\calH^1
 = 2 \left( \frac{2^p}{3}\right)^i \xrightarrow{i\to\infty} 0,
\end{align*}
if $1\leq p<\frac{\log 3}{\log 2}$. Thus, $\m_p({C}_*\la,\calH^1) = 0$ for
$p\in[1,\log_2 3)$.
\end{proof}

\begin{remark}
Lemma \ref{lem:translatesmodulus} can be generalized to the case where
$\lambda$ is a measure on the semigroup $\mathcal F_\nu$ of $\nu$-preserving
transformations on a measure space $(Y,\nu)$.  In the above proof, instead of
taking $x,y\in G$, one takes $\phi\in \mathcal F_\nu$, $y\in Y$, and
integrates accordingly, replacing $yx$ with $\phi(y)$.   We leave the details
to the interested reader.
\end{remark}
%\subsection*{Dimension preservation of almost all translates}

\subsection{Carnot groups and left translates} \label{Section:translatesproductsproofs}

Lemma \ref{lem:translatesmodulus} allows us to generalize Theorem
\ref{thm:randomtranslates} from Euclidean space to Carnot groups, as  was
discussed in Section 1.1. For the proof we also assume Theorem
\ref{thm:equaldim}, which  will be proven in Section
\ref{section:nonexpansionproofs}. We refer the reader to \cite[Chapter
11]{HK:SmetP} for definitions and background on Carnot groups in the context
of metric space analysis.

\begin{theorem}\label{thm:carnot}
Let $\mathbb{G}=(\mathbb{G},\cdot)$ be a Carnot group of homogeneous
dimension $Q>1$, equipped with its left invariant Carnot-Carath\'eodory
metric. Suppose $E\subset\mathbb{G}$ is a bounded $q$-Ahlfors regular set,
$0<q\leq Q$, and $f:\mathbb{G}\to \mathbb{G}$ is a quasiconformal mapping.
Then
  \begin{equation}
  \label{carnotconclusion}
    \dim_H f(y\cdot E) = \dim_H E,
  \end{equation}
for $\calH^Q$-a.e. $y\in\mathbb{G}$.
\end{theorem}

\begin{proof}[Proof of Theorem \ref{thm:carnot}]
We first prove the theorem in the case when $E$ is a bounded set. Since the
metric is left-translation invariant, the sets $y\cdot E$ are isometric to
$E$, and hence Ahlfors $q$-regular. Moreover, the Hausdorff measure $\calH^Q$
is positive and locally finite, and left invariant. Since Carnot groups are
unimodular (i.e., left and right Haar measures coincide), $\calH^Q$ is right
invariant as well (though see Remark \ref{rem:samenullsets} below).

Let $K\subseteq \mathbb G$ be the set of points $y\in \mathbb G$ for which
equation \eqref{carnotconclusion} fails.  Since left translations are
isometries, the measures $y_*\calH^q\lfloor_E=\calH^q\lfloor_{y\cdot E}$ are
all $q$-regular as well. By Theorem \ref{thm:equaldim}, we have
$\mod_{Q/q}(\{y\cdot E:y\in K\},\calH^Q)=0\text{.}$ Applying Lemma
\ref{lem:translatesmodulus} with $G=\mathbb G$, $\nu=\calH^Q$,
$\lambda=\calH^q\lfloor_E$, and $p=Q/q$, we have that $ \calH^Q(K)=0$ as
desired.
%
%Now if E is an unbounded $d$-regular set, $x\in X$ and $m$ is a positive
%integer, we denote by $E_m=E\cap B(x,m)$ and let $K_m = \{y\in \mathbb{G} :
%\dim_H f(y \cdot E_m) \neq \dim_H E_m\}.$
%%\begin{equation*}
%%  K_m = \{y\in \mathbb{G} : \dim_H f(y \cdot E_m) \neq \dim_H E_m\}.
%%\end{equation*}
%Since
%\begin{eqnarray*}
%\dim_H E &=& \lim_{m\to\infty} \dim_H E_m,\\
%\dim_H f(E) &=& \lim_{m\to\infty} \dim_H f(E_m)
%\end{eqnarray*}
%if follows that if $\dim_H E\neq\dim_H f(y\cdot E)$ then for $m$ large enough
%we have $\dim_H E_m\neq\dim_H f(y\cdot E_m)$. Thus
%$K\subset\bigcup_{m=1}^{\infty} K_m$, and since we already proved that
%$\calH^D(K_m)=0$ for all $m\in\mathbb{N}$ (since $E_m$ is bounded for every
%$m\in \mathbb{N}$) it follows that $\calH^D(K)=0$ also in the unbounded case.
\end{proof}

\begin{remark}
\label{rem:samenullsets} In the preceding proof, we did not really need to
use the fact that $\mathbb G$ is unimodular. In any locally compact
topological group, left and right Haar measures are comparable on compact
subsets, so that right Haar measure $\nu$ is locally Ahlfors $Q$-regular.
Theorem \ref{thm:equaldim} easily generalizes to allow replacement of
$\calH^Q$ with the locally $Q$-regular measure $\nu$, so that
\[
\mod_{Q/q}(\{y\cdot E:y\in K\},\nu)=0\text{.}
\]
Lemma \ref{lem:translatesmodulus} implies $\nu(K)=0$ as before, so using
again the fact that $\nu$ and $\calH^Q$ are locally comparable, one has
$\calH^Q(K)=0$ as well.
\end{remark}

%\begin{lemma}
%  Let $(E,\la)$ and $(Y,\nu)$ be measure space, s.t. $\nu(Y)>0$. Consider the family of measures
%  $$\mathbb{E}$$
%\end{lemma}

%\begin{definition}
%  For a family of measures $\mathbf E$ we denote
%  \begin{eqnarray*}
%    \delta(\mathbf E)
%    &=& \inf \{ t: \m_p \mathbf E >0 \}\\
%    &=& \sup \{ t: \m_p \mathbf E =0 \}.
%  \end{eqnarray*}
%\end{definition}

%\subsection{Bojarski's Lemma}
%The next well known result is due to Bojarski ~\cite{Bojarski}, and will be used in the proof of Theorem \ref{theorem:expansion} below. It is a consequence of the boundedness of the Hardy-Littlewood maximal operator on doubling metric measure spaces (see also \cite{H}). The proof of this more general version is the same as in \cite{Bojarski}.

%\begin{lemma}[Bojarski Lemma] \label{BL}
%Suppose $\mathcal{B}=\{B_i,B_2,\ldots\}$ is a countable collection of balls in a doubling metric measure space $(X,\mu)$ and $a_i\geq0$ are real numbers. Then there is a positive constant $C$ such that
%\begin{equation}
%\int_X \left(\sum_{\mathcal{B}}a_i\chi_{AB_i}(x)\right)^q d\mu \leq C(A,p,\mu) \int_X \left(\sum_{\mathcal{B}}a_i\chi_{B_i}(x)\right)^q d\mu
%\end{equation}
%for every $1<q<\infty$ and $A>1$.
%\end{lemma}

\section{General versions of non-expansion}\label{Section:results}

 %Therefore, the collection of curves $\g$ in $\mathbb{R}^N$ such that $f(\g)$ is unrectifiable is  %also has zero modulus:
%\begin{equation}\label{eq:mod-unrect}
%  \m_N(\{\g: \calH^1(f(\g))=\infty\}, \calH^N)=0.
%\end{equation}

We begin with our most general (and technical) dimension distortion theorem,
from which all of our upper bounds on dimension distortion derive. We recall
that $\calL_d(X)$ denotes the family of lower $d$-regular measures in $X$,
and that we
 denote the support of a measure $\lambda\in \calL_d(X)$ by $E_\lambda$.  %, to help the reader relate the theorem to its corollaries.
%(\ref{eq:mod-unrect})
\begin{theorem}\label{theorem:expansion}
Let $D>d>0$, and $D'>d'>0$, with $\frac{D}{d}\geq \frac{D'}{d'}$. Suppose
that $\mu$ is an upper $D$-regular measure on a separable metric space $X$,
and that $f\colon X\rightarrow Y$ is a quasisymmetric embedding.
\begin{enumerate}
\item \label{expansion2} If $\calH^{D'}\lfloor_{f(X)}$ is locally finite,  then %
%%%%%%%%% replace THIS: \eqref{eq:expansion2} with THIS: Theorem \ref{theorem:expansion}
for $\m_{{D}/{d}}$-almost every $\lambda\in \calL_d(X)$,
$\calH^{d'}\lfloor_{f(E_\lambda)}$ is locally finite.
\item  \label{expansion3} If $\calH^{D'}(f(X))=0$, then for
    $\m_{{D}/{d}}$-almost every $\lambda\in \calL_d(X)$,
    $\calH^{d'}(f(E_\lambda))=0$.
\end{enumerate}
%\begin{equation}\label{eq:expansion3}
  %\m_{\frac{D}{d}}(\{ \la_E\in\calL : \calH^{d'} (f(E)) >0 \},\mu) = 0\text{.}
%\end{equation}
\end{theorem}

%Note that in the preceding theorem, $\mu$
By fixing the values of $D$ and $d$ and varying $D'$ and $d'$, we obtain the following
corollary.

\begin{corollary}\label{cor:expansion}
Let $D>d>0$, and let $\mu$, $X$, and $f$ satisfy the assumptions of Theorem
\ref{theorem:expansion}.  Then for $\m_{\frac{D}{d}}$-almost every
$\lambda\in \calL_d(X)$,
\begin{equation}\label{inequality:expansion}
   \frac{\dim_H f(E_\lambda)}{\dim_H f(X)}\leq \frac{d}{D}.
\end{equation}
	In particular, if  $\m_{{D}/{d}}(\calL_d(X))>0$ as well, then there is a measure $\lambda\in \calL_d(X)$ satisfying
\eqref{inequality:expansion}.
\end{corollary}

%%%%\begin{remark}
%%%%\label{rem:generalization}

Theorem \ref{theorem:expansion} and Corollary \ref{cor:expansion} will be
proven in Section \ref{section:nonexpansionproofs}. Readers interested in
analysis on metric spaces, particularly in terms of Newton-Sobolev theory,
may wish to keep in mind the special case of curve modulus.  In this setting,
integration with respect to arc-length along a rectifiable curve
$\gamma\colon [0,l]\rightarrow $ (parametrized by arc-length) is the same as
integration with respect to the push-forward of Lebesque measure,
$\gamma_{*}(\calH_1)$, which is easily seen to be a lower $1$-regular
measure. Since almost every curve is locally rectifiable, we may apply
Corollary \ref{cor:expansion} to curve families.

\begin{corollary}\label{cor:curvetheorem}
Suppose that $\mu$ is an upper $D$-regular measure, $D>1$, on a separable
metric space $X$, and that $f\colon X\rightarrow Y$ is a quasisymmetric
embedding.  Then for $\m_D$-almost every curve $\gamma$ in $X$,
\begin{equation}\label{inequality:expansion0}
   \dim_H f(\gamma)\leq \frac{\dim_H f(X)}{D}.
\end{equation}
\end{corollary}

%In the setting of Ahlfors regular subsets of Ahlfors regular spaces, Corollary \ref{cor:expansion} has a  purely metric formulation, in the sense that the results may be stated only in terms of Hausdorff measure, which is determined entirely by the metric on $X$.
 \begin{proof}
    For each locally rectifiable curve $\gamma$, the corresponding
    arclength measure $\lambda$ is lower-regular and $\gamma= E_\lambda$.
    Moreover, in this case $d=1$, so inequality (\ref{inequality:expansion})
    of Corollary \ref{cor:expansion} implies
    (\ref{inequality:expansion0}).
    \end{proof}
If $X$ is Ahlfors $D$-regular, we may apply Corollary \ref{cor:expansion} to
the family $\mathcal{A}_d(X)$ of bounded \emph{Ahlfors $d$-regular} subsets
of $X$, by letting $\mu=\calH^D$ and observing that $\{\calH^d
\lfloor_E\}_{E\in\mathcal{A}_d(X)}\subseteq\calL_d(X)$.  Note that in this
case $\dim_H E=d$ and $\dim_H X=D$, and recall from the previous section that
in this context, the notion of ``almost every $d$-regular set'' is
well-defined.

\begin{corollary}\label{thm:compression}
Let $D>d>0$, let $X$ be Ahlfors $D$-regular, and let $f\colon X\rightarrow Y$ be a quasisymmetric mapping. % between Ahlfors $D$-regular metric spaces.
Then for $\m_{{D}/{d}}$-almost every $S\in\mathcal{A}_d(X)$,
\begin{equation}\label{inequality:dregexp0}
   \frac{\dim_H f(S)}{\dim_H S}\leq \frac{\dim_H f(X)}{\dim_H X}.
  \end{equation}
	In particular, if $Y$ is also $D$-dimensional, then $\m_{{D}/{d}}$-almost every $S\in \mathcal{A}_d(X)$ satisfies
\begin{equation}\label{inequality:dregexp1}
   \dim_H f(S)\leq \dim_H S.
  \end{equation}	
\end{corollary}

    \begin{proof}
     For each $S$, let $\lambda$ be $d$-dimensional Hausdorff
     measure restricted to $S$, so $S = S_\lambda$. Then Corollary \ref{cor:expansion}
     immediately implies (\ref{inequality:dregexp0}).
    \end{proof}

\begin{remark}
Inequality (\ref{inequality:dregexp0}) may be thought of as a generalization
of the fiber-wise expansion estimate (1.18) for products. Indeed, if both
$(E,\la)$ and $(F,\nu)$ are Ahlfors regular spaces then $(X,\mu) = (E \times
F,\la\times\nu)$ is also Ahlfors regular and we may apply Corollary
\ref{thm:compression} with $S=E\times\{y\}$. Inequality
(\ref{inequality:dregexp0}) then will imply that for $\m_{D/d}$-almost every
$E\times\{y\}$, or more precisely for $\m_{D/d}$-almost every $\la_y$ like in
Section \ref{subsection:products}, the following holds
\begin{align*}
\frac{\dim_H f(E\times\{y\})}{\dim_H E\times\{y\}}
             \leq \frac {\dim_H f(E \times F)}{\dim_H E \times F},
\end{align*}
where as usual $D$ is the dimension of $X=E\times F$.
%
%with $\mu=\la\times\nu$ the $D=d+\delta$-regular product measure on $X$,
%where $\la=\calH^d\lfloor_E$ and $\nu=\calH^{\delta}\lfloor_F$, t
Moreover, by Lemma \ref{lemma:product modulus}, if $F'\subset F$ then the
family $\{\la_y\}_{y\in F'}$ has positive modulus if and only if $\nu(F')>0$.
Therefore we obtain the following strengthening of (\ref{ratios1}):
\begin{align}
\esssup_{y\in F} \frac{\dim_H f(E\times\{y\} )}{\dim_H E\times\{y\}}
             \leq \frac {\dim_H f(E \times F)}{\dim_H E \times F},
\end{align}
where $\esssup$ is taken with respect to the measure $\nu$ on $F$. Thus, we
obtain the following generalized principle of {\emph{``fiberwise expansion
implies global expansion"}}: if there is a set $F'\subset F$ such that
$\nu(F')>0$ and the fibers $E \times \{y\},y\in F'$ have their dimensions
increased by $f$ by a factor $\alpha \geq 1$, then the dimension of the whole
product $E\times F$ increases by at least a factor of $\alpha$ as well.
\end{remark}

%\begin{remark}
%Strictly speaking, Theorem \ref{expandsmetric} can be deduced as an immediate
%corollary to Theorem \eqref{expandsmetricquantitative},
%but it is somewhat easier to argue directly from Corollary
%\ref{cor:expansion}
%\end{remark}
%\begin{theorem} \label{expandsmetric}
%Suppose $E$ is a (complete?) Ahlfors $d$-regular metric space, that $F$ is doubling, and that $f$ is a quasisymmetric map defined on $E\times F$.  Then
%\begin{equation}\label{expands metric}
%  \inf_{y\in F}\dim_H f(E\times\{y\})\leq
%    \frac{\dim_H f(E\times F)}{\dim_H(E\times F)}\cdot \dim_H (E)\text{.}
%\end{equation}
%\end{theorem}

 \begin{remark}\label{relaxationremark}
As mentioned before, an important part of Theorem \ref{theorem:expansion} is
the relaxation of the regularity conditions on the underlying space $X$ as
well as the measure $\mu$. Most significantly, \emph{we do not assume that
$\mu$ is a doubling measure}, i.e. $\mu(B(x,2r))\leq C \mu(B(x,r))$ for all
$x\in X$ and $r>0$. Instead, we  assume only upper regularity of $\mu$. This
relaxation is of paramount importance to the proof of Theorem
\ref{expandsmetricquantitative}, as Frostman's Lemma only gives us upper
regularity (see Remark \ref{doublingmetricvsmeasure} below for further
discussion of this.) As a consequence, we cannot use the well known
``Bojarsky Lemma", which is usually used in similar situations when
estimating the modulus from above, see e.g. \cite[Theorem 15.10]{H} or
\cite[Proposition 2.9]{HK}.  Instead, our argument is more in the spirit of
the proof of \cite[Theorem 1.2]{Williams}, in that a supremum must be used in
place of a summation when constructing admissible functions, see
(\ref{admissiblefunction}) below.  This method, in turn, relies on the
quasi-preservation of annuli guaranteed by Lemma \ref{lemma:qsdistortion},
and so our applications to quasiconformal maps in the plane depend heavily on
the equivalence between quasiconformality and quasisymmetry.
\end{remark}

\begin{remark}\label{doublingmetricvsmeasure}
It is important to keep in mind the distinction between a doubling metric
space and a doubling measure on a metric space.  It is easy to show that a
metric space with nonzero doubling measure must itself be doubling. Volberg
and Konyagin\cite{VolbergKonyagin} proved that, conversely, compact doubling
metric spaces admit doubling measures, and Luukainen and
Saksman\cite{LuukkainenSaksman} extended this result to arbitrary complete
metric spaces.

On the one hand, in order to invoke Frostman's Lemma in the first place, $F$ must be Borel in its completion, and must be doubling as a metric space. % (the usual statement assumes requires that $F\subseteq \mathbb R^n$ be a Borel set, for some $n$, but since the validity of Frostman's Lemma does not change upon replacing a the metric $d_X$ with a ``snowflake metric'' $d_X^s$, nor under Bilipschitz maps, it follows by Assouad's embedding theorem that the lemma holds in arbitrary doubling metric spaces. % of a metric space $X$ in the first place, $X$ must be doubling. (The classical statement assumes $X$ isDespite this fact, the Frostman Lemma does not necessarily give us
Even so, the conclusion of Frostman's Lemma only gives upper regularity for
$\mu$, and so even though a doubling measure exists, we cannot assume that
$\mu$ simultaneously has the doubling property and the desired regularity.
\end{remark}

%\begin{remark}
%The theorems above actually hold for more general sets than just Ahlfors
%regular ones. Namely, we can assume that $E$ satisfies a weaker condition
%\begin{eqnarray}\label{cond:d-logregular}
%r^{d+\eps}\lesssim \la_E(E\cap B(x,r))\lesssim r^{d-\eps}, \quad \forall x\in E, \forall r\in(0,r_0).
%\end{eqnarray}
%The proofs of the corresponding results are very similar to the ones in
%\cite{Hak:IMRN} and will not be given in the present paper. \footnote{Don't
%we need, ``$\forall\epsilon>0$'' as well?  Also, are generalizations clear?}
%\end{remark}

\section{Proofs of Theorem \ref{theorem:expansion},
Corollary \ref{cor:expansion} and Theorem
\ref{thm:equaldim}.}\label{section:nonexpansionproofs}
%
%\subsection*{Dimension non-expansion}
%
\begin{proof}[Proof of Theorem \ref{theorem:expansion}]
We first observe that if the conclusions of the theorem hold for $d'$, then
they hold for every number larger than $d'$ as well, and so we may assume
with no loss of generality that $\frac{D}{d}=\frac{D'}{d'}$.

To begin, we suppose $U\subseteq X$ is bounded.  Then $f$ and $f^{-1}$ are
uniformly continuous, and we may let $\omega(t)$ be a modulus of continuity
for $f^{-1}$.

Fix $\epsilon>0$, and let $\delta=\omega(\epsilon)$.  Since
$\calH^{D'}\lfloor_{f(X)}$ is locally finite, we may choose balls
$$B_i'=B(y_i,s_i)\subseteq f(X)\subseteq Y$$ such that $\bigcup_{i=1}^\infty
B_i'\supseteq f(U)$, each $s_i<\epsilon$, and $\sum_{i=1}^\infty s_i^{D'}
\leq \calH^{D'}(f(U))+\epsilon$.  By Lemma \ref{lemma:qsdistortion}, there is
at each point $x_i=f^{-1}(y_i)$ a radius $r_i<\delta$ such that the balls
$B_i=B(x_i,r_i)$ satisfy
\[
B_i'\subseteq f(B_i)\subseteq f(10B_i) \subseteq \eta(10)B_i'\text{,}
\]
where $\eta$ is the distortion function for $f$.

Now, let
\begin{equation}\label{admissiblefunction}
  g_{\delta}(x)=\sup_{i\in\mathbb N} \frac{s_i^{d'}}{r_i^d}\chi_{2B_i}(x)\text{,}
\end{equation}
and for each $M>0$, define the family $\calL^{M}_{U,\epsilon}$ of measures
$\la\in\calL_d$, whose supports $E_\la$ are distorted significantly, as
follows

\begin{align}
\calL^{M}_{U,\epsilon} : = \left\{ \lambda\in\calL_d(X) \, | \,
r_\lambda>2\delta \mbox{ and } \calH^{d'}_{\epsilon\eta(10)}(f(E_\lambda\cap
U))>M C_{\lambda}\right\},
\end{align}
where $C_\lambda$ and $r_\lambda$ are the constants in the definition of
lower $d$-regularity  \eqref{ineq:p-regular}.

Next, to estimate $\int g_{\delta} \, d\la$ from below we let
$I_\lambda\subseteq \mathbb{N}$ be the set of indices $i$ such that $B_i\cap
E_\lambda\neq \emptyset$. By the basic covering lemma, there is a subset
$J_\lambda\subseteq I_\lambda$ such that
$$\bigcup_{j\in
J_\lambda}10B_j\supseteq \bigcup_{i\in I_\lambda}2B_i\supseteq E_\lambda\cap U,$$
and for $j_1\neq j_2$, $2B_{j_1}\cap 2B_{j_2}=\emptyset$.

Since the balls $2B_j$ are disjoint, we have
\begin{eqnarray*}
%\label{estimate1}
\int  g_\delta\,d\lambda
&\geq&
\int \sup_{j\in J_\lambda} \frac{s_j^{d'}}{r_j^d}\chi_{2B_j} \,d\lambda
= \int \sum_{j\in J_\lambda} \frac{s_j^{d'}}{r_j^d}\chi_{2B_j} \,d\lambda\\
&=&
\sum_{j\in J_\lambda} \frac{s_j^{d'}}{r_j^d}\lambda(2B_j)
\geq \frac{2^d}{C_\lambda} \sum_{j\in J_\lambda} s_j^{d'},
\end{eqnarray*}
where the last inequality holds because $\la\in\calL_d$.
Now, since $s_j<\eps$ and
$$\bigcup_{j\in J_\lambda}\eta(10)B_j'\supseteq \bigcup_{j\in J_\lambda} f(10B_j)\supseteq f(E_\lambda\cap U),$$
from the definition of $\calL_{U,\eps}^M$ we obtain
\begin{align*}
\sum_{j\in J_\lambda} (\eta(10)s_j)^{d'} \geq&
 \calH^{d'}_{\eps \eta(10)}(f(E_\lambda\cap U)) \geq M C_{\la}.
\end{align*}
Combining the last two estimates we obtain
\begin{align}
\int  g_\delta\,d\lambda
&\geq \frac{2^d}{C_\lambda} \sum_{j\in J_\lambda} s_j^{d'}
\geq \frac{2^d}{C_\lambda} \cdot \frac{M C_{\la}}{\eta(10)^{d'}}
= \frac{2^d M}{\eta(10)^{d'}}.
\end{align}
Therefore, if $C=\frac{\eta(10)^{d'}}{2^d M}$ then $C g_{\delta}$ is
admissible for $\calL^{M}_{U,\epsilon}$. Since $\mu$ is upper $D$-regular, we
can estimate the modulus of this family as follows:
\begin{align}\label{estimate2}
 \begin{split}
\m_{{D}/{d}}(\calL_{U,\epsilon}^{M})
 &\leq
C^{{{D}/{d}}} \int g_\delta^{{D}/{d}} \,d\mu
= C^{{{D}/{d}}}\int \sup_{i\in \mathbb N} \left(\frac{s_i^{d'}}{r_i^d}\chi_{2B_i}\right)^{{D}/{d}} \,d\mu\\
&\leq
C^{{D}/{d}} \int \sum_{i=1}^{\infty} \left(\frac{s_i^{d'}}{r_i^d}\chi_{2B_i}\right)^{{D}/{d}}\,d\mu
\leq
C^{{D}/{d}
}\sum_{i=1}^{\infty} \frac{s_i^{D'}}{r_i^D}\mu(2B_i) \\
&\leq
{C^{{D}/{d}}} {2^{D} C_\mu} \sum_{i=1}^{\infty} s_i^{D'}
\leq  {C^{{D}/{d}}} {2^{D} C_\mu} \left(\calH^{D'}_{\epsilon \eta(10)}(f(U))+\epsilon \right).
 \end{split}
\end{align}
We would like to let $\epsilon$ approach $0$ in (\ref{estimate2}), so we have an
estimate in terms of $\calH^{D'}(f(U))$ rather than $\calH^{D'}_{\eps
\eta(10)} f(U)$. For that note, that if $\epsilon_1>\epsilon_2$ then
$\calL^{M}_{U,\epsilon_1}\subseteq \calL^{M}_{U,\epsilon_2}$. Thus, if we
define
$$\calL_U^M=\bigcup_{n=1}^{\infty}
\calL_{U,\frac{1}{n}}^M,$$
then $\calL_U^M$ consists precisely of those measures $\lambda\in \calL_d(X)$
for which
$$\calH^{d'}(f(E_\lambda\cap
U))>M C_{\la}.$$
By Ziemer's Lemma (see Lemma \ref{lemma:modulus-properties}) we have
$$\m_{{D}/{d}}(\calL_U^M)\leq \lim_{n\to\infty} \m_{{D}/{d}}(\calL_{U,\frac{1}{n}}^M),$$
combining which with (\ref{estimate2}) we obtain the key modulus estimate
\begin{equation}
\label{keyestimate}
\m_{{D}/{d}}(\calL_U^M) \leq \eta(10)^{\frac{Dd'}{d}} C_{\mu} \frac{\calH^{D'}(f(U))}{M^{{D}/{d}}}.
\end{equation}

To prove $(1)$, note that if we define
$\calL_U^\infty=\bigcap_{k=1}^\infty\calL_U^k$ then $\calL_U^\infty$ consists
of all the measures $\lambda\in \calL_d(X)$ for which
$\calH^{d'}(f(E_\lambda\cap U))=\infty.$ Therefore, from the monotonicity of
modulus and inequality \eqref{keyestimate} it follows that for every
$k\in\mathbb{N}$ we have
$$\m_{{D}/{d}}(\calL_U^\infty)\leq \m_{{D}/{d}}(\calL_U^k)\leq\eta(10)^{\frac{Dd'}{d}} C_{\mu}\frac{\calH^{D'}(f(U))}{k^{D/d}}.$$
In particular, if $\calH^{D'}(f(U))<\infty$ then
\begin{align}\label{equality:mod}
\m_{{D}/{d}}(\calL_U^\infty)=\m_{{D}/{d}}\{\la\in\calL_d(X) \,| \, \calH^{d'}(f(E_{\la}\cap U))=\infty \} =0
\end{align} Finally, by countable subadditivity
of modulus and of Hausdorff measure, along with the separability of $X$, we
obtain \eqref{expansion2} from (\ref{equality:mod}).

To prove $(2)$ let $\calL_U^0=\bigcup_{k=1}^\infty\calL_U^{{1}/{k}}$. Note
that $\calL_U^0$ consists of those measures $\la\in\calL_d(X)$ for which
$\calH^{d'}(f(E_\lambda\cap U))>0.$ If $\calH^{D'}(f(U))=0$ then by
\eqref{keyestimate} $\m(\calL_U^{1/k})=0$ for every $k\in\mathbb{N}$ and
therefore by the countable subadditivity of modulus we obtain that
\begin{align}\label{equality:mod1}
\m_{{D}/{d}}(\calL_U^0)=\m_{{D}/{d}}\{\la\in\calL_d (X) \,| \, \calH^{d'}(f(E_{\la}\cap U))>0 \} =0.
\end{align}
Finally, again we obtain \eqref{expansion3} from (\ref{equality:mod1}) by
using the countable subadditivity of modulus, of Hausdorff measure and the
separability of $X$.
\end{proof}

\begin{remark}
As noted before, one of the most important features of the proof of Theorem
\ref{theorem:expansion} is the construction of the admissible function
$g_{\delta}$ by the formula (\ref{admissiblefunction}), which allows us to
prove the key inequality (\ref{keyestimate}) without assuming that $\mu$ is a
doubling measure. This is similar to the arguments of Williams in
\cite{Williams}.
\end{remark}

\begin{proof}[Proof of Corollary \ref{cor:expansion}]
To prove inequality \eqref{inequality:expansion}, we suppose that
$D'>\dim_H(f(X))$, and that $\frac{D}{d}=\frac{D'}{d'}$.  Then by part
\eqref{expansion3} of Theorem \ref{theorem:expansion}, we know that for
$\m_{{D}/{d}}$-almost every $\lambda\in\calL_E$, $\calH^{d'}(f(E_{\la}))=0$,
and so $\dim_H(f(E_{\la}))\leq d'$, whereby
\[
\frac{\dim_H(f(E_{\la}))}{\dim_H(f(X))}\leq \frac{d'}{\dim_H(f(X))}
\]
Since this holds for $D'=\dim_H(f(X))+\frac{1}{n}$ for each $n$, we obtain by
countable subadditivity that for $\m_{{D}/{d}}$-almost every $\lambda\in
\calL_d(X)$,
\[
\frac{\dim_H(f(E_{\la}))}{\dim_H(f(X))}\leq \frac{d'}{D'}=\frac{d}{D}\text{.}\qedhere
\]

\end{proof}

%Corollary \ref{cor:curvetheorem} and Theorem \ref{thm:compression} follow
%immediately from the definitions, and Theorem \ref{thm:nonexpand} is of
%course a special case of Theorem \ref{thm:compression}.

%\subsection*{Dimension preservation.}

\begin{proof}[Proof of Theorem \ref{thm:equaldim}]
In light of Theorem \ref{theorem:expansion}, we only need to show that $\dim
f(E)\geq d$ for $\m_{{D}/{d}}$-almost every $E\in\calA_d(X)$.  We shall
actually show more, namely, that  $\calH^d(f(E))>0$ for almost every $E$.

When $d=D$ the theorem follows immediately from condition $N^{-1}$. Suppose
then that $d<D$. Let $L_{f^{-1}}(y,r)=\sup_{y'\in
B(y,r)}d(f^{-1}(y'),f^{-1}(y))$, and $L_{f^{-1}}(y)=\limsup_{r\rightarrow
0}\frac{L_{f^{-1}}(y,r)}{r}$. Quasisymmetry implies that
$L_{f^{-1}}(y)^D\lesssim J_{f^{-1}}(y)<\infty$, at almost every $y\in Y$.
Here
\[
J_{f^{-1}}(y):=\limsup_{r\rightarrow 0}\frac{\calH^D(f^{-1}(B(y,r)))}{\calH^D(B(y,r))}=\frac{df_{*}\calH^D\lfloor_{X}}{d\calH^D\lfloor_{Y}}
\]
is the volume derivative of $f^{-1}$.

Condition $N^{-1}$ implies that at almost every $x\in X$, we have
$L_{f^{-1}}(f(x))<\infty$.  Egorov's Theorem then gives sets $A_\epsilon$,
with $\calH^{D}(X\backslash A_\epsilon)<\epsilon$, on which
$\frac{L_{f^{-1}}(f(x),r)}{r}$ is uniformly bounded for all $x\in
A_{\epsilon}$ and $r<R_\epsilon$.  It follows that the restriction
$f^{-1}|_{f(A_\epsilon)}$ is locally Lipschitz.

Now let $A=\bigcup_{\epsilon>0} A_\epsilon$. Since $\calH^D(X\backslash
A)=0$, ${D}/{d}$-almost every measure $\lambda$ satisfies
$\lambda(X\backslash A)=0$ (this is easy to see by taking the admissible
function $\rho=\infty\cdot\chi_{X\backslash A}$ for the exceptional family).
It follows that ${D}/{d}$-almost every $E\in\calA_d(X)$ satisfies
$\calH^d(E\cap A_\epsilon)>0$ for some $\epsilon$.  Since
$f^{-1}|_{f(A_\epsilon)}$ is locally Lipschitz, we have
$\calH^d(f(E))\geq\calH^d(f(E\cap A))>0$, and the theorem is proved.
\end{proof}

\section{Products and the proof of Theorem
         \ref{expandsmetricquantitative}}\label{Section:products}%
%
%
%\subsection*{Products}
%
 To apply Frostman's Lemma in the proof of Theorem
\ref{expandsmetricquantitative}, we need the next lemma, which follows
quickly from a similar result in \cite{Tyson:frequency}, see  Lemma 3.1 in
that paper. Though the statement there is restricted to maps from Euclidean
spaces, the proof uses no metric properties of the domain,  employing only
the fact that $\mathbb R^{m+n}=\mathbb R^m\times\mathbb R^n$ is equipped with
the product topology. Hence it applies in our setting as well.  We give the
argument from \cite{Tyson:frequency} here for the reader's convenience.
\begin{lemma}
\label{borel} Let $E$ and $F$ be topological spaces, with $E$
$\sigma$-compact, let $Z$ be a metric space, and let $f\colon E\times
F\rightarrow Z$ be continuous.  Then for each $s\geq 0$, the set
\[
F^s=\{y\in F: \dim_H(f(E\times\{y\}))>s\}
\]
is a Borel set.
\end{lemma}

\begin{proof}
 %\footnote{is it still?}
Suppose first that $E$ is compact.  %It is immediate from the definitions that for every metric space $X$, $\calH^s(X)=0$ if and only if $\calH^s^\infty(X)=0$.  From this, along with the definition of Hausdorff dimension and the fact that the Borel sets form a $\sigma$-algebra,
It suffices to show that for each $t\geq 0$ and $\epsilon>0$ the set
\[
F^t_{\infty,\epsilon}=\{y\in :\calH^t_\infty(f(E\times\{y\}))\geq\epsilon \}
\]
is closed, since $F^s=
\bigcup_{t>s}\bigcup_{\epsilon>0}F^t_{\infty,\epsilon}$. To this end, let
$y\in F\backslash F_{\infty,\epsilon}^t$.  Then there is a sequence of open
balls $B(z_i,r_i)$, with each $z_i\in Z$,  and with $\sum_{i=1}^\infty
r_i^s<\epsilon$, such that
\[
f(E\times\{y\})\subset\bigcup_{i=1}^{\infty}B(z_i, r_i)\text{,}
\]
or equivalently,
\[
E\times\{y\}\subseteq\bigcup_{i=1}^{\infty}f^{-1}(B(z_i, r_i))\text{.}
\]
By the continuity of $f$, $\bigcup_{i=1}^{\infty}f^{-1}(B(z_i, r_i))$ is
open, so by the compactness of $E$, there is an open set $U\ni y$ such that
\[
E\times U\subseteq\bigcup_{i=1}^{\infty}f^{-1}(B(z_i, r_i))\text{,}
\]
so that $U\subseteq F\backslash F^t_{\infty,\epsilon}$. Thus $F\backslash
F^t_{\infty,\epsilon}$ is open, whereby $F^t_{\infty,\epsilon}$ is closed as
desired.

Finally, suppose $E=\bigcup_{i=1}^\infty E_i$, with each $E_i$ compact.
%For each $y\in F$, we have
%that
%$\dim_H(f(E\times y))>s$
%if and only if $\dim_H(f(E_i\times y))>s$ for some $i\in \mathbb{N}$, so that
Then $ F^s=\bigcup_{i=1}^\infty F^s_i\text{,} $ where
\[
F^{s}_i=\{y\in F: \dim_H(f(E_i\times\{y\}))>s\}\text{.}
\]
Since these sets are Borel by the compact case of the lemma, $F^s$ is Borel
as well.
\end{proof}

\begin{comment}
\begin{remark}\footnote{delete this?  It's interesting to me, but possibly not to any other human beings...}
Notice that the preceding proof shows that the set
\[
F_h^+=\{y\in F :\calH^h(f(E\times\{y\}))>0\}
\]
is Borel as well, for every gauge function $h$.
\end{remark}
\end{comment}

\begin{proof}[Proof of Theorem \ref{expandsmetricquantitative}]
The theorem is trivial if $d'=D'$.  We also observe that since quasisymmetric
maps are uniformly continuous on bounded sets, they extend to the completions
of the spaces on which they are defined.  We may therefore assume that
$d'<D'$ and that $E$ and $F$ are complete.  Note that $d$-regularity implies
the doubling property, so that $E$ is a complete doubling metric space.  As
such, $E$ is proper (balls are compact), and a fortiori $\sigma$-compact.

Let $D=\frac{d}{d'}D'$, and $F_{d'}=\{y\in F:\dim_H f(E\times\{y\})>d'\}$.
Suppose by way of contradiction that
\begin{equation}
  \calH^{\frac{d}{d'}D'-d}(F_{d'})>0\text{.}
\end{equation}

By Lemma \ref{borel}, $F_{d'}$ is Borel set. Thus it follows from Frostman's
Lemma that there is a nonzero upper $(D-d)$-regular measure $\nu$ on
$F_{d'}$.

Since $E$ (and hence $\calH^d$) is $d$-regular, %it follows by the upper regularity of $\calH^d$ on $E$ that
the measure $\mu=\calH^d\times\nu$ is upper $D$-regular on $X=E\times
F_{d'}$, and so by Lemma \ref{lemma:product modulus}, we have that
$\m_{{D}/{d}}\left(\{\calH^d\lfloor_{E\times\{y\}}:y\in F_{d'}\}\right)>0$.
From this and Corollary \ref{cor:expansion} (which applies by the lower
regularity of each measure $\calH^d\lfloor_{E\times\{y\}}$), we then conclude
that there is some $y\in F_{d'}$ such that
\[
\dim_H f(E\times \{y\}) \leq \frac{d}{D}D'= d'\text{,}
\]
contradicting the definition of $F_{d'}$.
\end{proof}

\begin{proof}[Proof of Corollary \ref{expandsmetric}]
Let $d'<\inf_{y\in F}\dim_H f(E\times\{y\})$, and $D'=\dim_H(f(E\times F))$.
Using the notation of the preceding proof, we have that $F=F_{d'}$, and so by
Theorem \ref{expandsmetricquantitative}, we obtain
\[
\calH^{\frac{d}{d'}D'-d}(F)=0\text{,}
\]
so that $\dim_H(F)\leq \frac{d}{d'}D'-d$.

Since $E$ is Ahlfors $d$-regular, the packing dimension of $E$ is equal to
its Hausdorff dimension (see Theorem $6.13$ of \cite{Mattila}). Therefore we
also have (see e.g.\ Corollary $8.11$ of \cite{Mattila}) that
\[
\dim_H (E\times F) = \dim_H E +\dim_H F \leq \frac{d}{d'}D'\text{,}
\]
whereby
\[
d'\leq\frac{dD'}{\dim_H(E\times F)}=\frac{\dim_H(f(E\times F))}{\dim_H(E\times F)}\cdot\dim_H(E)\text{.}
\]
Since this holds for every $d'<\inf_{y\in F}\dim_H f(E\times\{y\})$, the
proof is complete.
\end{proof}

%\label{Section:increasing-dimension}

%In the previous section we mapped many parallel line segments to
%arcs with no rectifiable subarcs. In this section we give a
%similar construction: here the images are arcs with Hausdorff
%dimension $> 1$. The maximum possible dimension of the image arc
%depends on the dimension of the set of parallel segments (the larger
%the set of segments, the less increase is possible), but our examples
%give the sharp values. In cases, when some increase in dimension is
%possible this construction is stronger than the previous one;
%otherwise our previous result on non-rectifiability is
%stronger.

\section{Proofs of Theorem \ref{first thm} and Corollary
         \ref{cor:sharp-dimension} }\label{Section:sharpnessproofs}
\begin{proof}[Proof of Theorem \ref{first thm}]

%The bounds are sharp by work of  Balogh, Monti and Tyson.
%
%
%\begin{theorem}
%For each $0< s< 1$ there is a Cantor set $E \subset [0,1]$
% of dimension $s$ such that
%for any $\epsilon >0$ there is a quasiconformal map $f$ such that
%$$  \inf_{y \in E} \dim( f([0,1] \times \{ y\}) ) \geq \frac {2}{s+1} - \epsilon,$$
% and
%$$  \inf_{0 \leq x \leq 1} \dim(f(x+iE))  \geq \frac {2s}{s+1} - \epsilon$$
%where $x+iE =\{ x+iy: y\in E\}$.  In fact, for any
%subset $F \subset E$, we have
%$$  \inf_{0 \leq x \leq 1} \dim(f(x+iF))  \geq \frac {2\dim(F)}{s+1} - \epsilon.$$
%\end{theorem}

For $0<t<1$, let $\alpha= 2^{-1/t}$, and let $S$  be the Cantor set given  by a standard
iterative construction that starts with   $I_0 = [0,1]$ and
 replaces  each $n$th generation interval $I$ with two $(n+1)$st
generation intervals of length $\alpha |I|$ and distance  $\frac 13(1-2
\alpha)|I|$ from each other and from the endpoints of $I$. It is easy to
check that $ \dim(S) = t$.
% =- \log 2/ \log \alpha$
 %and that  $t$  takes all
%values between $0$ and $1$ as $ \alpha$ goes from $0$ to
%$\frac 12$.
The proof is somewhat cleaner when we restrict to the case that $\alpha^{-k}$
is an integer  for some  positive integer $k$ (which may depend on $\alpha$).
Note that this holds whenever $t\in \mathbb Q$.  The proof in the general
case follows the same idea, with a few technical modifications, described in
the ensuing Remark \ref{reductionremark}.
Taking $\alpha^{-k}=N$, we have $N^{t}=2^k$. By taking multiples of $k$, we
may assume $N$ is as large as we wish.

The basic building block in the
construction  of $f$ is a quasiconformal map of the unit square $Q=[0,1]^2$,
\begin{align}
\Phi :[0,1]^2 \to [0,1]^2.
\end{align}
Let $\{I_j\}$, $j =1, \dots , 2^k$ be the $k$th generation covering intervals
of $S$, each of which has length $\alpha^k=\frac{1}{N}$,   and let $R_j =
[0,1] \times I_j$ be rectangles with their short edges on the vertical sides
of $Q$. Note that each of these is isometric to the $1 \times \frac 1N$
rectangle
$$R=\{(x,y)\in \mathbb{R}^2 : 0<x<1, 0<y<1/N\}.$$
The map $\Phi$ will be conformal on each of these rectangles
and will be quasiconformal on the rest of $Q$. Our construction also gives
that $\Phi$ is the identity on the top and bottom edges of $Q$ and it is
symmetric with respect to the vertical bisector of $Q$, so $ \Phi(1,y) =
\Phi(0,y)+(1,0) $ on the vertical sides of $Q$. This means that $\Phi$ can be
extended to a quasiconformal map of the whole plane by simply mapping each
square $Q+(n,m)$ to itself by $(x,y) \to \Phi(x-n, y-m) +(n,m)$.

The map $\Phi$ is constructed by specifying a generalized quadrilateral $\mathscr{T}
\subset Q$ ($\mathscr{T}$ for ``tube'') that has two opposite sides on the vertical
 sides of $Q$ and conformal modulus $N$
(the same as  $R$). This means there is a conformal map
$$\phi :R \mapsto \mathscr{T}$$
that maps vertices to vertices. This map is used to define a conformal map of
each $R_j$ to a translate  $\mathscr{T}_j$ of $\mathscr{T}$ that connects the left and right
sides of $Q$. The tubes $\mathscr{T}_j$ will have disjoint closures that do not hit the
top and bottom edges of $Q$ and so the complement of these tubes in $Q$ are
$2^k +1$ regions. We define a quasiconformal map from each component of $Q
\setminus \cup_j R_j$ to the corresponding component of $Q \setminus \cup_j
\mathscr{T}_j$  so that it extends the mapping on each $R_j$, is the identity on the
top and bottom edges of $Q$ and is symmetric on the vertical edges of $Q$.
The quasiconformal constant $K$  of this map depends on the geometry of and
spacing between the $\mathscr{T}_j$, but is finite for the examples we will build.

The tube $\mathscr{T}$ will be constructed so that
\begin{equation}
\label{derivativebound}
|\phi'|  \geq C_1N^{\frac{1-t}{2}}\text{,}
\end{equation}
on all of $R$ and with some constant $C_1 > 0$ that is independent of $k$ and
$N$. First we use this estimate to finish the proof of the theorem, and then
we construct a tube for which this estimate is true.

Let $f_1= \Phi$. The rectangle $R$ has an obvious decomposition into $N$
squares of side length $1/N$.   Define a map $f_2:Q \to Q$ as the identity
outside $\cup_j R_j$ and inside each $R_j$ use a scaled version of $\Phi$ to
map each subsquare of $R_j$ to itself. In general, $f_n:Q \to Q$ is defined
as the identity off the $2^{nk}$ rectangles corresponding to the intervals of
generation $nk$ covering $S$ and is defined using a scaled copy of $\Phi$ on
the $N^n$ squares, of sidelength $\alpha^{nk}=N^{-n}$, making up each such
rectangle. Then
$$g_n = f_1\circ \dots \circ f_n$$
is  quasiconformal with constant $K$ (at most one map in the composition is
non-conformal  when applied to any point). Thus, the limiting map is also
$K$-quasiconformal, and we finally define $f$ as follows,
\begin{align*}
f=\lim_{n\to\infty} g_n = \lim_{n\to\infty} f_1\circ \dots \circ f_n.
\end{align*}
Moreover, on every generation $nk$ square $\tilde{Q}$ in one of the scaled
copies of $R$, $f$ restricts to a map of the square onto itself, followed by
a succession of $n$ conformal maps, each satisfying inequality
\eqref{derivativebound}.  We therefore have, for every generation $nk$ square
$\tilde{Q}$ in a scaled copy of $R$, the estimate
\begin{equation}
\label{diamestimate}
\diam(f(\tilde{Q}))\geq C_1^nN^{n(\frac{1-t}{2})}\diam(\tilde{Q}) = C_1^nN^{-n(\frac{1+t}{2})}\text{.}
\end{equation}
Since $C_1$ does not depend on our choice of $N$, we may suppose as well that
$N$ was chosen large enough so that $C_1\geq
N^{-(\frac{\epsilon}{1-\epsilon})(\frac{1+t}{2})}$, whereby

\begin{equation*}
\diam(f(\tilde{Q}))\geq N^{-n(\frac{1+t}{2(1-\epsilon)})}\text{.}
\end{equation*}

Consider a Borel subset $A\subseteq \mathbb R\times S$, and let
$\delta<\dim_H(A)$. By Frostman's Lemma, there is a positive measure $\mu$
supported on $A$ such that
$$\mu(\tilde{Q}) \leq C_{\mu} (\diam \tilde{Q})^{\delta} = C_{\mu}N^{-n\delta}.$$ Let
$\nu=f_{\#}\mu$ be the pushforward measure, and
 let $s=\frac{2\delta}{(t+1)}(1-\epsilon)$.  Then for every generation $nk$ square $\tilde{Q}$ in a scaled copy of $R$,
 \begin{align}
\label{upperregestimate}
\diam(f(\tilde{Q}))^s  \geq  N^{-ns(\frac{1+t}{2(1-\epsilon)})}=N^{-n\delta}\geq C_\mu^{-1} \mu(\tilde{Q})= C_\mu^{-1} \nu(f(\tilde{Q}))
\text{.}
\end{align}
%
%On the other hand, every generation $n$ square $\tilde{Q}$ lying outside of
%the generation $n$ rectangles is necessarily disjoint from $\mathbb R\times
%S$, so that $\nu(f(\tilde{Q}))=0$.
%
Next, note that every ball $B=B(x,r)$ in $\mathbb R\times S$ can be covered
by a uniformly bounded number of generation $nk$ squares of comparable
diameter in the scaled copies of $R$. Indeed, choose the smallest
$n\in\mathbb{N}$ so that $N^{-n}\geq 2r$. Then $B$ intersects at most $2$
rectangles of width $N^{-k}$ and thus at most $2^k$ rectangles of width
$N^{-(k+1)}$. Since each rectangle contains $N$ squares, we can cover $B$ by
$L=2^k\cdot 2N$ squares $\tilde{Q}$ of side length $N^{-(k+1)}$. Since $
\diam (\tilde{Q}) < r$ by Lemma \ref{upgradedmasslemma}, $\dim(f(A))\geq
\frac{2\delta}{(t+1)}(1-\epsilon)$. Since this  holds for arbitrary
$\delta<\dim_H(A)$, we obtain $\dim(f(A))\geq
\frac{2\dim_H(A)}{(t+1)}(1-\epsilon)$.

We are now done, except we must build a  tube $\mathscr{T}\subset [0,1]^2$ that has the proper
estimate on the  conformal map to a rectangle.

\subsection*{The ``Tube" construction} As before, let $N = \alpha^{-k}$.
The $ 1 \times \frac 1N$ rectangle $R$ is divided into $N$ disjoint squares and
there are $2^k$ such rectangles. Since $\alpha < \frac 12$, we have $N \gg
2^k$ when $k$ is large. Let
$$ m = \left\lfloor  \sqrt{\frac {N-1}{2^{k-1} }} \right\rfloor,$$
so that
$$  \frac 12 N \leq M=  m^2  2^{k-1}  +1  \leq N, $$
if $N$ is large enough.

\begin{figure}[htb]
\centerline{
\psfig{figure=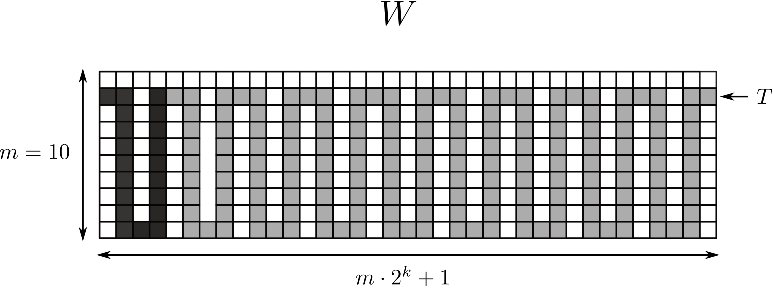,height=1.5in}
}
\caption{ \label{Wiggle521}
The first version of the ``large" tube $T\subset W$ is made up of $M = m^2 2^{k-1}+1$ shaded squares
forming a connected subset of a $m \times (2^k m +1)$ grid. Here we have
taken $k=2$ and $m=10$. The dark grey squares make up the pattern, which when repeated periodically $m2^{k-2}$ times, forms $T$, except for the ``last" or rightmost sub-square which is adjacent to the right edge of $W$.
}
\end{figure}

%Divide the rectangle $R$ with dimensions $1\times \frac{1}{N}$ into small subsquares of side-length $\frac{1}{Nm}$, thus there are

%The tube $T$ to which we will map $R$ is defined up to an affine similarity

We start by constructing a large copy of the tube $\mathscr{T}$, which we will denote by ${T}$. Consider the $(2^km+1) \times m$ rectangle ${W}$ shown in Figure
\ref{Wiggle521}. %Thus, $W$ is the $1\times \frac{m}{2^k m +1}$ rectangle, which consists of $(2^km+1) \times m$ disjoint subsquares of $[0,1]^2$ of side-length $(2^k\cdot m+1)^{-1}$.
The shaded (dark and light grey) unit area squares in ${W}$ form a tube ${T}$ that connects the two
vertical sides of $W$. More precisely, denoting by $Q_{p,q}$ the square $[p-1,p]\times[q-1,q]$ in the plane, the tube $T$ is obtained by considering the union of $2m$ (dark grey) squares
$$Q_{1,m-1};Q_{2,m-1},\ldots,Q_{2,1};Q_{3,1};Q_{4,1},\ldots,Q_{4,m-1},$$
repeating this pattern periodically $m\cdot 2^{k-2}$ times $(k\geq2)$ and attaching the rightmost square $Q_{m2^{k}+1,m-1}$.
Alternatively, we may write the tube $T$ as follows

\begin{align*}
  T= \bigcup_{i=0}^{m\cdot 2^{k-2}-1}
  &
  Q_{4i,m-1} \cup (Q_{4i+1,m-1}\cup\ldots\cup Q_{4i+1,1}) \cup \\
  \bigcup_{i=0}^{m\cdot 2^{k-2}-1}  & Q_{4i+2,1} \cup [ Q_{4i+3,1}\cup\ldots\cup Q_{4i+3,m-1}]
   \cup Q_{m2^{k}+1,m-1}.
\end{align*}

Let $M$ be the number of disjoint subsquares in ${T}$, which is also the area of $T$. Note, that the number of the subsquares of ${T}$, which do not intersect the right side of ${W}$ is exactly the half of the subsquares of $W$, which do not intersect its right side. Indeed, for each column of the grid containing only one square in $T$, move that square to the top row of the next column to the right. This gives a sequence of alternating ``full" and ``empty" columns, see Figure \ref{Wiggle521}. Since there are $m$ subsquares of $W$ in any column, we have
\begin{align*}
  M-1=\frac{\calH^2({W}) - m}{2} = \frac{m\times (2^k m +1) -m}{2}= m^2 \cdot 2^{k-1}.
\end{align*}
Therefore, $M=m^2 \cdot 2^{k-1} +1.$
%\begin{align*}
%  M=m^2 \cdot 2^{k-1} +1.
%\end{align*}

We think of $T$ as a
generalized quadrilateral with two sides (the ``short sides'') on the
vertical sides of $W$ and two other sides (the ``long sides'') that connect
the vertical sides of $W$.

\begin{figure}
\centerline{
\psfig{figure=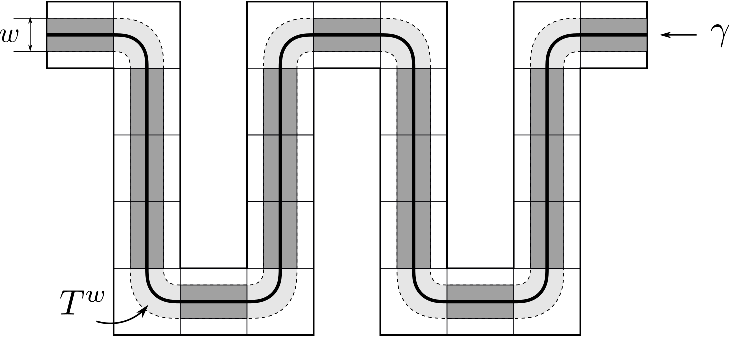,height=1.5in}
}
\caption{ \label{RoundCorners}
The tube $T^w$ has rounded corners, so that the conformal map
to the tube from the rectangle of the same modulus
has a ``nice derivative'' (the minimum and maximum expansion have bounded
ratio). Moreover, $T^w$ has a smaller width $w$; this increases the modulus of the family connecting
the two long edges of the tube. By ``thinning'' the
tube we can make its modulus exactly what we want. Since we start within
a factor of 4 of the desired modulus, only a bounded amount of thinning
is needed. The darker rectangles in ``non-corner" squares are used to estimate the modulus from below.
}
\end{figure}
Up to a similarity, the region $T$ is almost the tube we want, but it is convenient to ``round
the corners'' as shown in Figure \ref{RoundCorners}. Rounding the corners
will imply that the derivative of the conformal map of a rectangle $R$ (of
the same modulus as $T$)  to $T$ (as well as to the final tube $\mathscr{T}$) is everywhere comparable to the ratio of the
widths of $R$ and $T$, as is demonstrated in Lemma \ref{lemma:derivative}
below. Next, we carry out this procedure in more detail.

Let $\g$ be the ``core curve" which connects the midpoints of the short sides
of $T$. More precisely, if $Q_i$ is not a ``corner subsquare" of $T$ then
$\g\cap Q_i$ is a horizontal or vertical interval in $Q_i$
connecting the midpoints of opposite sides. On the other hand, if $Q_i$ is a
``corner subsquare" then $\g\cap Q_i$ is the arc of the circle of radius
$1/2$ connecting the midpoints of two adjacent sides of $Q_i$, see Figure
\ref{RoundCorners}.

For $0<w\leq 1$ we denote by $T^{w}$ the intersection of $T$ and the
$w/2$-neighborhood of $\g$. Thus $T^w$ is a tube around $\g$ of ``width" $w$.
Moreover, for $w<1$ the long sides of $T^w$ are $C^{1+\alpha}$ curves for
every $\alpha<1$, since they are differentiable curves consisting of line
segments and circular arcs. Next, we will show that $w$ can be chosen so that
$T^w$ is conformally equivalent to the $1\times 1/N$ rectangle $R$. For that
we will need the following lemma.

\begin{lemma}
For $w\in(0,1]$ let $\G^{w}$ be the path family connecting the long sides of
$T^{w}$. Then  we have the following estimates
\begin{align}\label{modest}
\frac{M  - 2^k m}{w} \leq \mod_2 \G^w \leq \frac{l(\g)}{w} < \frac{M}{w},
\end{align}
where $l(\g)$ is the length of $\g$. In particular if $2^km<M/2$ then for
$w=1$ we have
\begin{align}\label{modest1}
\frac 12 M \leq M  - 2^k m
\leq \mod_2 \G^1 < M .
\end{align}
\end{lemma}

\begin{proof}
To prove the right hand side of (\ref{modest}), take the constant metric
$\rho\equiv w^{-1}$ on $T^w$. Since every path connecting the long sides of
$T^w$ has length $\geq w$, we see that $\rho$  is admissible for $\G$. Hence,
\begin{align*}
  \m_2 \G \leq \int_T w^{-2} dxdy = \frac{\calH^2(T^w)}{ w^{2}}\leq \frac{l(\g)w}{w^2} = \frac{l(\g)}{w}.
\end{align*}
To prove the left hand inequality of (\ref{modest}) consider all the
``non-corner'' squares in $T$. Note that there are $2^km$ ``corner" squares
and therefore only $M-2^km$ ``non-corner" squares, and we denote them by
$Q_i, i=1\ldots,M-2^km$. For each such ``non-corner" square $Q_i$ let
$\G_i^w$ be the subfamily of paths in $\G^w$ contained in $Q_i$. Note, that
the paths in  $\G_i^w$ connect the opposite sides of the rectangle $Q_i\cap
T^w$ of length $1$ and therefore $\m_2 (\G_i^w) = 1/w$. Since $\G_i^w$'s are
disjoint families, we have
$$\m_2(\G^w) \geq \m_2\left(\bigcup_{i=1}^{M-2^km}\G_i^w\right)  = \sum_{i=1}^{M-2^km} \m_2(\G_i^w) = \frac{M-2^km}{w}.\qedhere$$
\end{proof}

Note, that $\m_2(\G^w)$ changes continuously with $w$ and by
(\ref{modest}) can be made as large as desired by taking $w$ small enough.
Thus, there is a $w_0>0$ such that $\m_2 (\G^{w_0}) =N$. Moreover, from
(\ref{modest}) it follows that for $w_1,w_2\in(0,1]$ we have
\begin{align*}
\frac{1}{2} \frac{\m(\G^{w_2})}{\m(\G^{w_1})} \leq \frac{w_1}{w_2} \leq 2\frac{\m(\G^{w_2})}{\m(\G^{w_1})}.
\end{align*}
In particular, from (\ref{modest1}) we obtain
\begin{align*}
\frac{1}{4}\leq   2^{-1} \frac{M}{N}\leq w_0 < 1
\end{align*}
and the width of the thinner tube is comparable to the width of the original
tube.

Slightly abusing the notation, we let $T$ denote the large rounded, thinned
tube $T^{w_0}$ of modulus $N$ and finally define the tube $\mathscr{T}$ by
\begin{align*}
  \mathscr{T} = \sigma(T)
\end{align*}
where $\sigma$ is the similarity of the plane $$\sigma(x,y)=(m2^k+1)^{-1}(x,y).$$ Therefore $\mathscr{T}$ is a copy of $T$ which connects the vertical sides of $Q$ and intersects only $M$ subsquares of $[0,1]^2$ of sidelength $(2^k m + 1)^{-1}$.

Take the unit square $Q = [0,1]^2$ and
subdivide it into $(2^k m+1)^2$ disjoint subsquares of side length $(2^k m
+1)^{-1}$, grouped into $2^k$ rectangles of dimension $(2^k m+1) \times m$
and the single $(2^k m+1)\times 1$ strip at the bottom. See Figure
\ref{Wiggle521ED}.
\begin{figure}[htb]
\centerline{
\psfig{figure=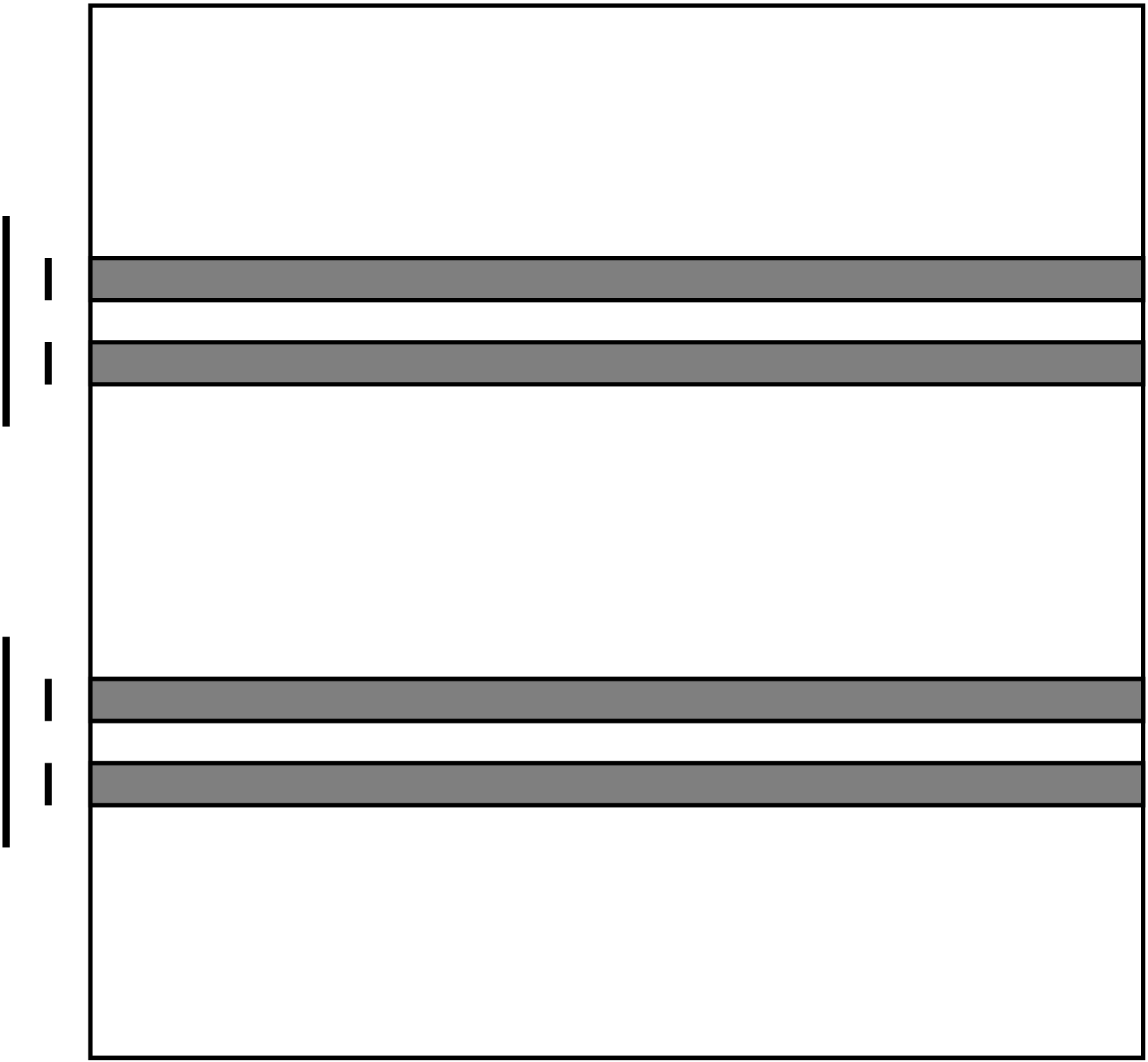,height=1.5in}
$\hphantom{xxxx}$
\psfig{figure=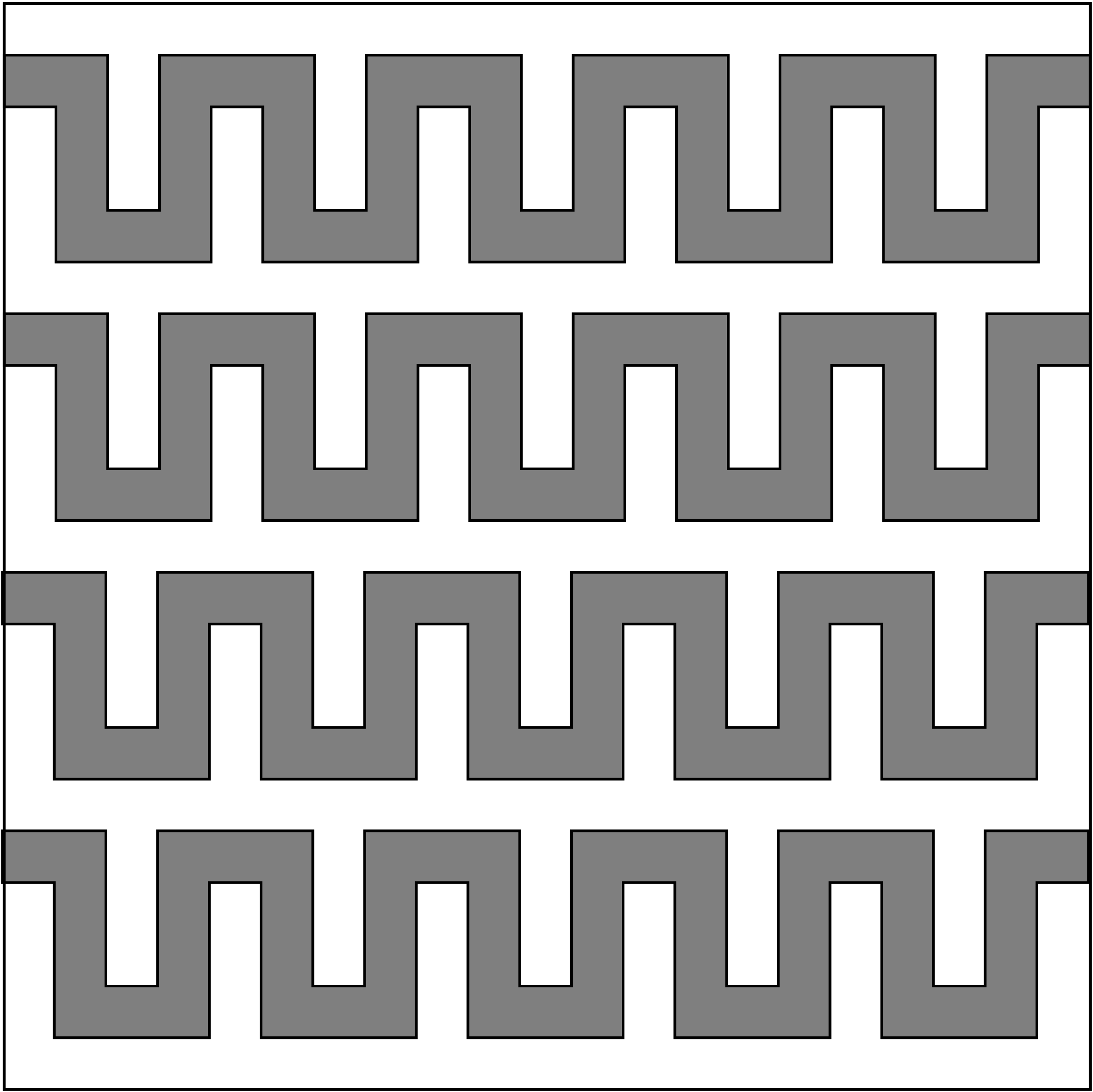,height=1.5in}
}
\caption{ \label{Wiggle521ED}
On the left is $Q$ and the shaded rectangles $R_j$. On the
right are  the tubes $T_j$ (we have omitted the rounding and
thinning to make the picture simpler).
 Each $R_j$ is conformally mapped
to the corresponding $T_j$ (they have the same modulus, so we
can send vertices to vertices) and  we quasiconformally extend
these maps to map of  $Q$ onto itself that is the identity on the top and
bottom edges and is symmetric on the left and right edges (the
tubes are symmetric with respect to the vertical bisector of
$Q$, so this is possible).
}
\end{figure}

Inside each rectangle, place a (shifted up) copy of the tube $\mathscr{T}$. These are the $\mathscr{T}_j$'s mentioned earlier. Finally, we show in Lemma \ref{lemma:derivative} that the conformal map from $R_j$ to $\mathscr{T}_j$ (mapping vertices to
vertices), or equivalently the map $\phi:R \to \mathscr{T}$ has derivative
everywhere comparable to $\frac{\text{width}(\mathscr{T})}{\text{width}(R)}$.
But, since $N^t=2^k$ and thus $m\simeq \sqrt{\frac{N}{2^k}} = N^{\frac{1-t}{2}}$ it follows, that
\begin{align*}
  |\phi'|\simeq \frac{\text{width}(\mathscr{T})}{\text{width}(R)}= \frac{w_0(2^k m+1)^{-1}}{N^{-1}} \simeq \frac{N}{2^km}
  \simeq \frac{N}{N^t N^{\frac{1-t}{2}}} = N^{\frac{1-t}{2}}.
\end{align*}
%
%since the width of
%$\mathscr{T}$ is $w_0$, which is comparable to $1$, and the width of $\mathscr{T}$ as well as of each $\mathscr{T}_j$ is comparable to
%$$(2^k m +1)^{-1} \simeq 2^{-k}/m \simeq 2^{-k} N^{1/2} 2^{-k/2} = N^{-\frac{t+1}{2}} ,$$
%and the width of each $R_j$ is $\frac{1}{N}$.
%
%$$\frac{\text{width}(\mathscr{T})}{\text{width}(R)}
%            \simeq \frac {N^{-\frac{t+1}{2}}}{\frac{1}{N}} = N^{\frac{1-t}{2}},$$
thus completing the proof of Theorem \ref{first thm} (except for the proof of Lemma \ref{lemma:derivative}).

\begin{lemma}\label{lemma:derivative}
The conformal map $\phi:R\to\mathscr{T}$ taking the vertices of $R$ to vertices of $\mathscr{T}$, has a
derivative that is everywhere comparable to the width of $\mathscr{T}$  divided by the
width of $R$, i.e.
\begin{align}\label{estimate:derivative}
  |\phi'(z)|\simeq \frac{\mathrm{width}(\mathscr{T})}{\mathrm{width}(R)}.
\end{align}
\end{lemma}

\begin{proof}
%As explained before, since $\m(\mathscr{T})=N$ there is a conformal map $\phi$ of $\mathscr{T}$ to $R$.
%
We will show that the absolute value of the derivative of $\phi^{-1}:\mathscr{T}\to R$ is comparable to $\frac{\mathrm{width}(R)}{\mathrm{width}(\mathscr{T})}$ everywhere in $\mathscr{T}$.
%
%Moreover, by post and precomposing $\phi^{-1}$ with linear rescalings of the plane we may work instead with another conformal map $\psi$ mapping the original large tube $T$ onto a rectangle of width $1$.
Using complex notation we consider the linear maps
\begin{align*}
  s_{\mathscr{T}}(z)=\sigma^{-1} z = (2^km+1)z = \frac{w_0}{\mathrm{width}(\mathscr{T})}z, \quad s_R(z)=\frac{1}{\mathrm{width}(R)}z = Nz
\end{align*}
and note that if we define
$$\psi=s_R\circ\phi^{-1}\circ s_{\mathscr{T}}^{-1},$$
then $\psi$ maps the large rounded tube $T$ onto a rectangle of width $1$ and we have the following expression for the derivative of $\phi$,
$$|(\phi^{-1})'|=|(s_R^{-1}\circ\psi\circ s_{\mathscr{T}})'|=|(s_R)'|^{-1}|\psi'||(s_{\mathscr{T}})'|=w_0\frac{\mathrm{width}(R)}{\mathrm{width}(\mathscr{T})}|\psi'|\simeq \frac{\mathrm{width}(R)}{\mathrm{width}(\mathscr{T})}|\psi'|.$$
Thus,  to obtain the estimate (\ref{estimate:derivative}) it is enough to show that if $\psi$ is the conformal mapping of $T$ onto a rectangle of width $1$, taking vertices to vertices,  then we have that $|\psi'|\simeq 1$ everywhere in $T$.

Now, the conformal  map $\psi$ from the tube $T$ to the rectangle of width $1$ can be extended by
Schwarz reflection across the ``short ends'' of the tube and rectangle  to a
map $\tilde{\psi}$ from an infinite tube $\tilde{{T}}$ of width $w_0$ to the infinite strip
$S =\{ x+iy: 0 < y <1\}$. Assume we have done this (to avoid separate arguments near the short
ends). The
Koebe $\frac 14$-theorem then implies that a conformal map
$\tilde{\psi}:\tilde{T} \to S$ has derivative
$$ |\tilde{\psi}'(z)| \simeq \frac {\dist(\tilde{\psi}(z),\partial S)}
                        {\dist(z, \partial \tilde{{T}})},$$
so it suffices to prove that the right hand side is uniformly bounded and
bounded away from zero.

Let $v$ be the imaginary part of $\tilde{\psi}$; it is a harmonic function on
the infinite tube $\tilde{{T}}$  with boundary values $v=1$
 on one boundary component (call it $X_1$) and    $v=0$ on the other
(call it $X_0$). Then $\dist(\tilde{\psi}(z),\partial S)=\min(v(z),1-v(z))$
and it suffices to prove the following implications:
\begin{align}
& \dist(z,X_0) \to 0 \quad \Longrightarrow  \quad v(z) \simeq \dist(z,X_0)  \label{imaginary-estimate1}\\
& \dist(z,X_1) \to 0 \quad \Longrightarrow \quad 1-v(z) \simeq \dist(z,X_1). \label{imaginary-estimate2}
\end{align}
We will prove (\ref{imaginary-estimate1}) in detail; the estimate (\ref{imaginary-estimate2})  follows from an essentially identical argument.

\begin{figure}[htb]
\centerline{
\psfig{figure=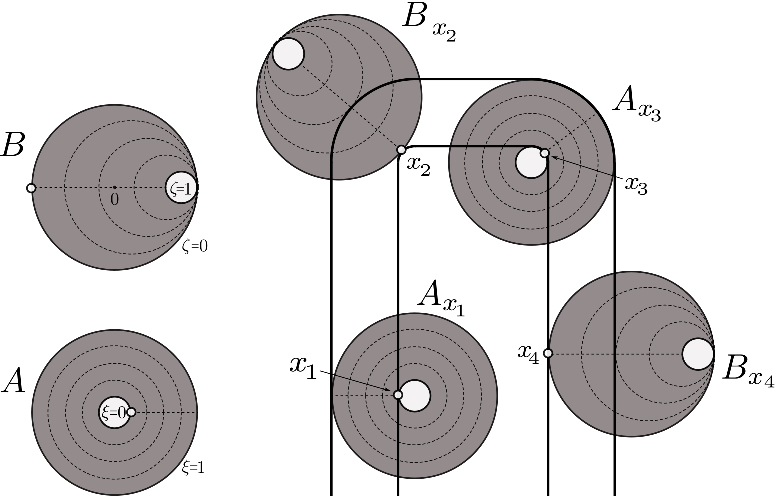,height=3in}
}
\caption{ \label{UVWcompare}
On the left are the domains of definition of $\xi$ and
$\zeta$ and their boundary values. A special boundary
 point is marked
with a white dot. Explicit computations show that $\xi$
and $\zeta$ vanish at this point and grow almost linearly on the
interior segment normal to this point. The dashed lines
show level lines of the two functions. The righthand side
of the figure shows how  these domains can be positioned
with the
special point at any boundary point of the tube; this shows
that $v$ has the correct estimate on every normal segment
crossing the tube, hence on the whole tube.
}
\end{figure}

To prove (\ref{imaginary-estimate1}) we will show that for any point $x\in X_0$ there exist functions $\zeta_x$ and $\xi_x$ defined on the segment $I_x$ which passes through $x$ and is orthogonal to $\partial T$  such that for all $z\in I_x$ we have
\begin{align}\label{est:auxiliary}
  \zeta_x(z) \leq v(z) \leq \xi_x(z),
\end{align}
and such that as  $\dist(z,X_0)\simeq 0$ the following estimates hold with constants independent of $x\in X_0$,
\begin{align}
  &\dist(z,X_0)\lesssim \zeta_x(z),\label{est:auxiliary2}\\
  &\xi_x(z) \lesssim \dist(z,X_0), \label{est:auxiliary3}
\end{align}
thus implying (\ref{imaginary-estimate1}).

The functions $\xi_x$ and $\zeta_x$ will be the ``transported versions" of harmonic functions $\xi$ and $\zeta$ which we define next, see also Figure \ref{UVWcompare}.

First, let
$A = \left\{z\in\mathbb{C} : \frac{1-w_0}{2} < |z| < \frac{1+w_0}{2} \right\},$ and define $\xi:A\to\mathbb{R}$ to be the harmonic function in $A$ with boundary values
\begin{align}
  \xi(z) = \begin{cases}
    0 \mbox{ for }  |z| = \frac{1-w_0}{2},\\
    1 \mbox{ for }  |z| = \frac{1+w_0}{2}.
  \end{cases}
\end{align}
The function $\xi$ can be explicitly written as $\xi(z) = (\log \frac{1+w_0}{1-w_0})^{-1}{\log\frac{2|z|}{1-w_0}}$. Then, as $z\in A$ approaches $\frac{1-w_0}{2}$ along the real axis, we have the estimate $\xi(z) \simeq \dist (z,\frac{1-w_0}{2})$, which follows from the fact that the partial derivative of $\xi$ at $\frac{1-w_0}{2}$ with respect the first coordinate is positive (this is clear from the construction, but could also be checked using the formula for $\xi(z)$ given above).

%Since the partial derivative in $x$ the $\partial_1{\xi}\left(\frac{1-w_0}{2}\right)>0$, we have $\xi(z) \simeq \dist (z,\frac{1-w_0}{2})$ as $z$ approaches $\frac{1-w_0}{2}$ along the real axis (here $\partial_1$ denotes the partial derivative with respect to the first coordinate).

%and attains the values $\xi=0$ and $\xi=1$ on the inner and outer boundary components of   $A$, namely $\{|z| = \frac{1-w_0}{2}\}$ and $\{|z| = \frac{1+w_0}{2}\}$, respectively.
%Note, that since $w_0<1$ the inner radius of $A$ is non-zero and the
%boundary of $T=T^{w_0}$ is $C^{1+\alpha}$ for every $0\leq \alpha <1$; in
%particular, at each boundary point there is a tangent disk exterior to $T$ of
%radius at least $ (1-w_0)/2$.
%The dashed lines show the level lines of $\xi$
%($\xi$ can be explicitly written in the form $ a+ b\log |z|$, but the exact
%formula is not important for us).
Now, for a fixed $x\in X_0$ there is a unique isometry $r_x$ of the plane such that $r_x(\frac{1-w_0}{2})=x$ (i.e. the special point of $A$ is mapped to $x\in X_0$), and the circle $r_x(\{|z|=\frac{1-w_0}{2}\})$ is tangent to $X_0$ at $x\in X_0$ and is located outside of
the (open) tube $\tilde{T}$. We denote $A_x=r_x(A)$ and define the harmonic function $\xi_x$ on $A_x$ by  $\xi_x=\xi\circ r_x^{-1}$. Then
\begin{align*}
v \leq  1 = \xi_x &\mbox{ on }  \partial A_x \cap \tilde{T} \\
v = 0 \leq \xi_x &\mbox{ on } \partial \tilde{T}\cap A_x,
\end{align*}
therefore $v \leq\xi_x$ on the whole boundary
of $A_x\cap \tilde{T}$. By the maximum principle,
$v\leq \xi_x$ on $A_x \cap \tilde{T}$, which give one side of (\ref{est:auxiliary}).

 From the behaviour of $\xi$ near the special point $\frac{1-w_0}{2}=r_x^{-1}(x)$ of $A$, we have that as $z$ approaches $x$ along the normal segment to $X_0$ the following estimates hold
%
%$\xi_x(z)\simeq \dist(z,x)$
%
$$\xi_x(z)=\xi(r_x^{-1}(z)) \simeq \dist(r_x^{-1}(z),r_x^{-1}(x)) = \dist(z,x) = \dist(z,X_0).$$
%
%as $z\in A_x\cap T$ approaches $x$ along the normal segment to $X_0$ at $x$.
%
Therefore (\ref{est:auxiliary2}) holds as $z$ approaches $\partial X$ along the normal segment $I_x$ and the constants are independent of $x$, since they depend only on $\partial_1\xi(\frac{1-w_0}{2})$ (here $\partial_1$ denotes the partial derivative in the first coordinate).

To obtain the left hand inequality in (\ref{est:auxiliary}) we define the second function $\zeta$ on the cusped region $B=\{|z|<\frac{1+w_0}{2}\}\setminus D_0$ where
$D_0$ is the (closed) disk of radius $(1-w_0)/2$ that is tangent to the outer circle at the point $\frac{1+w_0}{2}\in\mathbb{C}$ and is contained in $\{|z|<\frac{1+w_0}{2}\}$. We set $\zeta$ to be the harmonic function on $B$ such that $\zeta=0$ on the boundary
of the larger disk and $\zeta=1$ on the boundary of the smaller disk. The
dashed curves are circles and show the level lines of $\zeta$. In Figure \ref{UVWcompare}, the special boundary point $-\frac{1+w_0}{2}\in\partial B$ is marked as a white dot; it is opposite to the point where the two boundary circles are
tangent.

The function $\zeta$ can be
computed explicitly by mapping the cusp region to the infinite strip $\{0<y<1\}$ by a M{\"o}bius
transformation and the estimate $\partial_1\zeta(-\frac{1+w_0}{2})>0$ clearly holds in this case as well. Indeed, since $\zeta$ is a M\"obius transformation the complex derivative $\zeta'$ (hence also partial derivatives) does not vanish in $\mathbb{C}$. Also $\zeta(-\frac{1+w_0}{2})=0$ and $\zeta(z)>0$ for $z\in B$, so $\partial_1\zeta$ is not negative at $-\frac{1+w_0}{2}$. Therefore we have the
$\zeta(z)\simeq \dist(z,-\frac{1+w_0}{2})$ as $z$ approaches $-\frac{1+w_0}{2}$ along the real axis.

Just like before, for $x\in X_0$ there is an isometric copy $B_x$ of the region $B$ (i.e. $B_x = s_x(B)$ where $s_x$ is an isometry of the plane), such that $B_x\cap \tilde{T}\neq\emptyset$ and $B_x$ is tangent to $\tilde{T}$ at its special point $s(-\frac{1+w_0}{2})=x\in X_0$. Next, defining $\zeta_x$ on $B_x$ by $\zeta_x=\zeta\circ s_x^{-1}$, we obtain that the boundary circle where $\zeta_x=1$ lies outside $\tilde{T}$ on the other side of $X_1$ (the boundary component of $\tilde{T}$ where $v=1$).
As above, it is easy to check that $\zeta_x \leq v$ on the intersection  $B_x\cap\tilde{T}$ and in particular, this is true for $z$ which are on the normal segment to $X_0$ at $x$, which proves completely (\ref{est:auxiliary}).

Finally, the estimate $\zeta_x(z)\simeq  \dist(z,X_0)$ is obtained from the corresponding estimate for $\zeta(z)$ the same way as (\ref{est:auxiliary2}) followed for $\xi_x$.

Thus we proved (\ref{est:auxiliary}),(\ref{est:auxiliary2}),(\ref{est:auxiliary3}) which in turn imply (\ref{imaginary-estimate1}). The argument for (\ref{imaginary-estimate2}) that $1-v(z) \simeq \dist(z,X_1)$ is identical to the one above, so this proves the lemma.
\end{proof}
As mentioned before, this completes the proof of Theorem \ref{first thm}.
\end{proof}

\begin{remark}
The rounding  of the tube $T$ in the proof is not actually necessary. If we
leave the corners of $T$ then the derivative of the conformal map of $R$ to
$T$ will tend to zero or $\infty$ at the corners, but will have uniform
bounds on any subregion that is bounded away from the corners. The next
generation of the construction will take place inside such a sub-region, and
so the proof of the theorem would  work even without rounding the corners
(however, rounding is easy and gives a cleaner estimate).
\end{remark}

\begin{remark}
\label{reductionremark}
The case when $t$ is irrational follows from the rational case considered above.
Indeed, suppose $0<t<1$ and $\eps>0$ are like in the statement of Theorem \ref{first thm}, and $t$ is irrational. Then
we may choose $t<t'<1$ and $\eps'>0$ with $t'\in\mathbb{Q}$ so that
$$\frac{1-\eps'}{t'+1}\geq\frac{1-\eps}{t+1},$$
e.g. if $\eps$ is small enough we may take $t'=t+\eps(1+t)$ and $\eps'\leq\eps^2$. By the case considered above there is an Ahlfors-$t'$ regular Cantor set $S'\subset\mathbb{R}$, such that for every Borel subset $A$ of $\mathbb{R}\times S'$ we have
$$\frac{\dim_H f(A)}{\dim_H A} \geq \frac{2(1-\eps')}{t'+1} \geq \frac{2(1-\eps)}{t+1}.$$
Now, by a theorem of Mattila and Saaranen \cite{MattilaSaaranen} there is an Ahlfors-$t$ regular subset $S\subset S'$, since $t<t'$. Thus, considering Borel subsets $A$ of $\mathbb{R}\times S\subset\mathbb{R}\times S'$ completes the proof for the irrational values of $t$.
\end{remark}

\begin{proof}[Proof of Corollary \ref{cor:sharp-dimension}]
Let $d'$ and $\eta$ be as in the statement of the corollary.  Without loss of generality assume that $1<d'<2$. Let $d=\frac{2}{d'}-1-\eta$.  Then $\frac{2}{d+1}=\frac{d'}{1-\frac{\eta
d'}{2}}>d'$, and we may therefore choose $\epsilon>0$ so that
$\frac{2}{d+1}-\epsilon>d'$.

Theorem \ref{first thm} implies that for every $0<d<1$ there is a $d$-regular
Cantor set $S\subseteq \mathbb R$ and a quasiconformal mapping $f$ of
$\mathbb{R}^2$ satisfying inequality \eqref{first estimate}.  That is, for
every $y\in S$,
\[
\dim_H f(I\times \{y\})\geq \frac{2}{d+1} -\epsilon>d'\text{,}
\]
so that $ \{y\in \mathbb{R}: \dim_H f(I\times \{y\})> d'\}\supseteq S\text{.}
$ Since $\dim_H S = d$, it follows that
\[
    \dim_H\{y\in \mathbb{R}: \dim_H f(I\times \{y\})> d'\} \geq \dim_H(S)=d = \frac{2}{d'}-1-\eta\text{.}
\]
\end{proof}

%------------------------------------------------------
\section{Proof of Theorem \ref{thm:unrectifiable}} \label{sec:unrectifiable}
\begin{proof}[Proof of Theorem \ref{thm:unrectifiable}]

The idea is quite simple. We start  by mapping horizontal tubes to nearly
horizontal tubes that oscillate. Inside these, we build thinner tubes that
oscillate on a smaller scale, and continue by induction,
 obtaining in the limit a Cantor set of curves  each
of which oscillates at infinitely many scales, hence has no rectifiable
subarc. Using sufficiently many sufficiently thin tubes we can keep the
quasiconstant bounded while making  the Hausdorff measure as large as we
want.

 The proof is essentially a sequence of pictures.
First, choose a diffeomorphism $\phi$ of the unit square $Q=[0,1]^2$
 to itself of the form $\phi(x,y)=(x,\psi(x,y))$ that is the identity on the boundary, and  translates the vertical segment
 $V=\{\frac 12\} \times [\frac 14, \frac 34]$
 up by $1/8$ to the segment
$\{\frac 12\} \times [\frac  38, \frac 78]$. Thus  segments of the form $S_y
= [0,1]\times \{ y\} \subset Q$ have curved images with the same endpoints as
$S_y$, but deviate by at least $ \frac 18$ from $S_y$ for
$y\in[\frac{1}{4},\frac{3}{4}]$. See Figure \ref{10}.

\begin{figure}[htb]
\centerline{
\psfig{figure=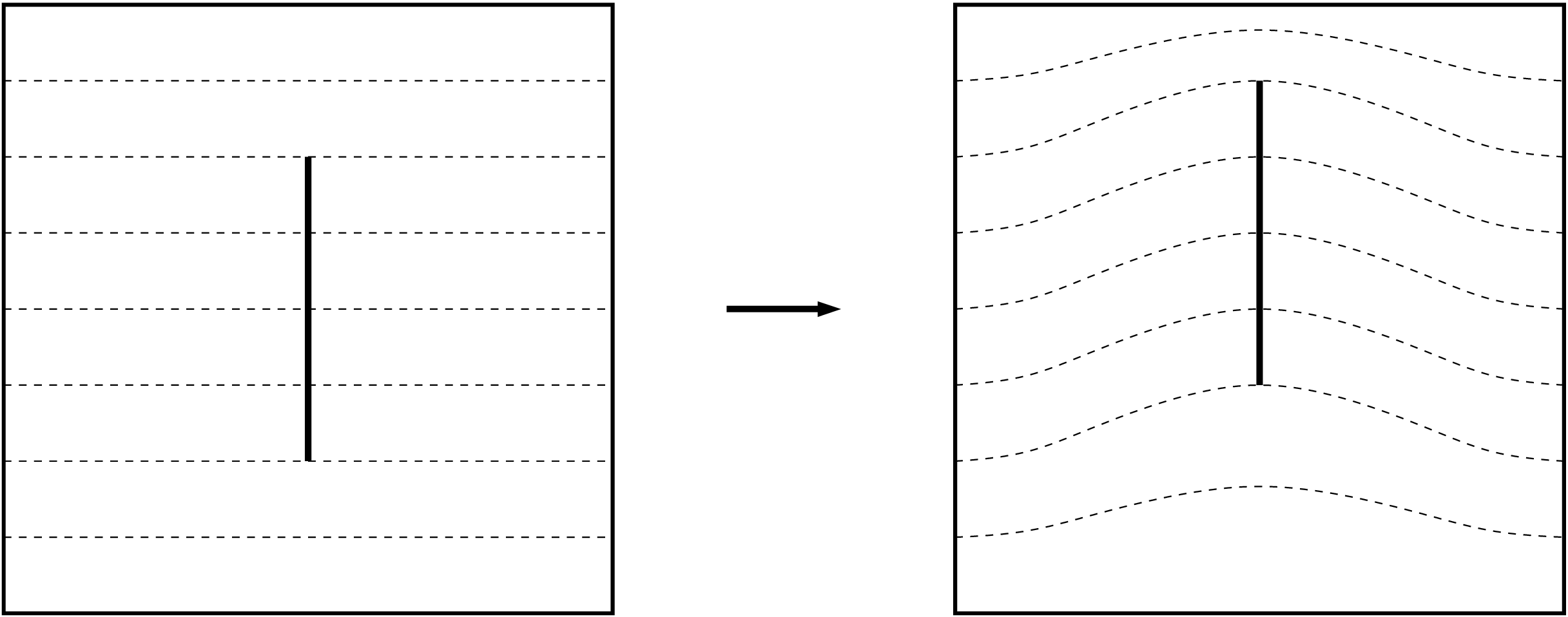,height=1.5in}
}
\caption{ \label{10}
Choose a smooth self-map of $Q$  that is the identity
on $\partial Q$, but causes horizontal segments near
the middle of $Q$ to bend by a definite amount.
}
\end{figure}

Let $\gamma =  \phi(S_y)$ for $y\in [\frac 14, \frac 34]$. Choose a large
integer $n$ and divide  $\gamma$ into  subcurves with endpoints $z_k = (x_k,
y_k)$
 where $x=\frac kn$, $k=0, 1, \dots n$.
Let $\gamma_n$ be the polygonal path with these vertices.  At each $z_k$,
$k=1, \dots n-1$, draw a segment perpendicular to and above $\gamma$ whose
length is the  same as the $(k-1)$st segment in $\gamma_n$.  The other
endpoint is  denoted $w_k$. Let $\tilde \gamma$ be the polygonal curve with
vertices $w_0, \dots, w_n$. See Figure \ref{9}.

\begin{figure}[htb]
\centerline{
\psfig{figure=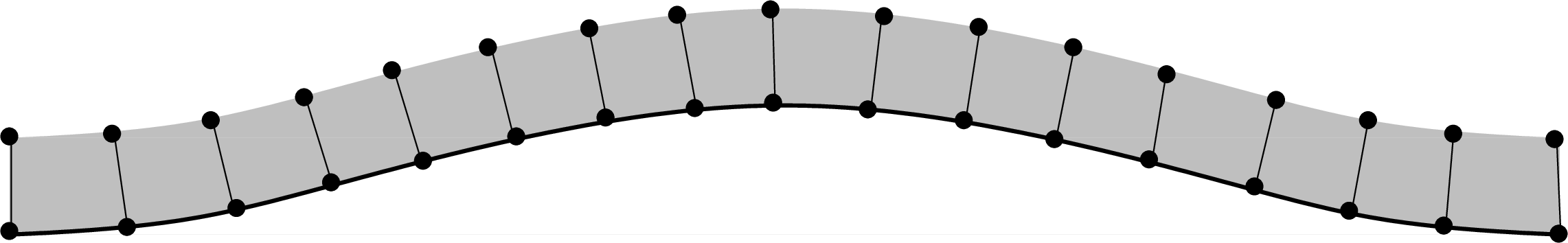,height=.75in}
}
\caption{\label{9}
We inscribe a polygonal curve $\gamma_n$ in $\gamma$ and
add perpendicular segments. We then connect the new vertices
to form a path $\tilde \gamma_n$ that is almost parallel
to $\gamma_n$.}
\end{figure}

The reason we defined $\tilde \gamma_n$ as we did (and did not simply
translate $\gamma_n$ upwards), is so that the region between the curves can
be divided into quadrilaterals that are very nearly squares. We will denote
this region $T_n$ and call it a ``tube''. Since $\gamma$ is a smooth curve,
adjacent segments of $\gamma_n$ have angles that agree to within $O(1/n)$;
thus adjacent perpendicular segments have angles that agree to within
$O(1/n)$. Thus each of the quadrilaterals formed by $\gamma_n, \tilde
\gamma_n$ and the segments joining their vertices have all angles within
$O(1/n)$ of $90$ degrees. See Figure \ref{8}.

\begin{figure}[htb]
\centerline{
\psfig{figure=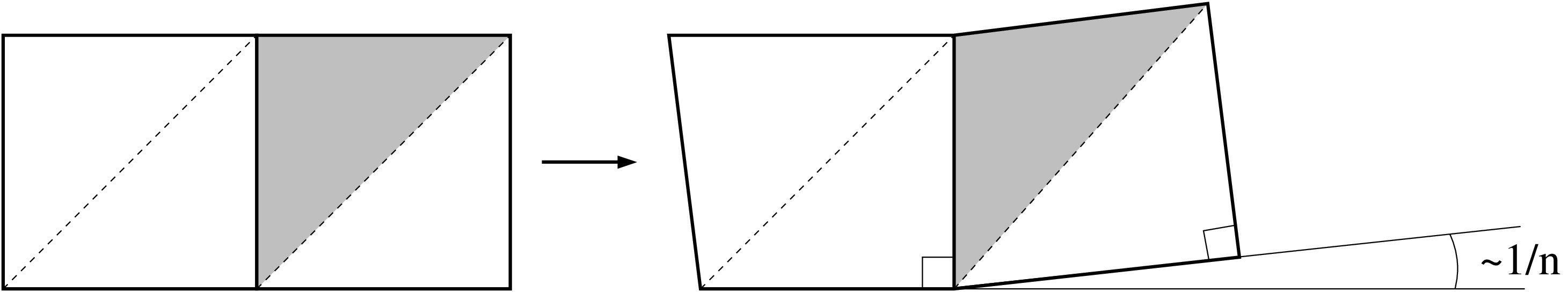,height=.8in}
}
\caption{ \label{8}
Two adjacent squares are mapped to adjacent  quadrilaterals
that are almost squares (with error $O(1/n)$).
}
\end{figure}

By adding diagonals and using the unique piecewise linear map between
corresponding triangles, we get a quasiconformal map from a true square to
each of our quadrilaterals with dilatation bounded by $O(1/n)$. Piecing these
together gives us a map from a $1 \times  \frac 1n$ rectangle to our  tube
$T_n$. See Figure \ref{9B}.

\begin{figure}[htb]
\centerline{
\psfig{figure=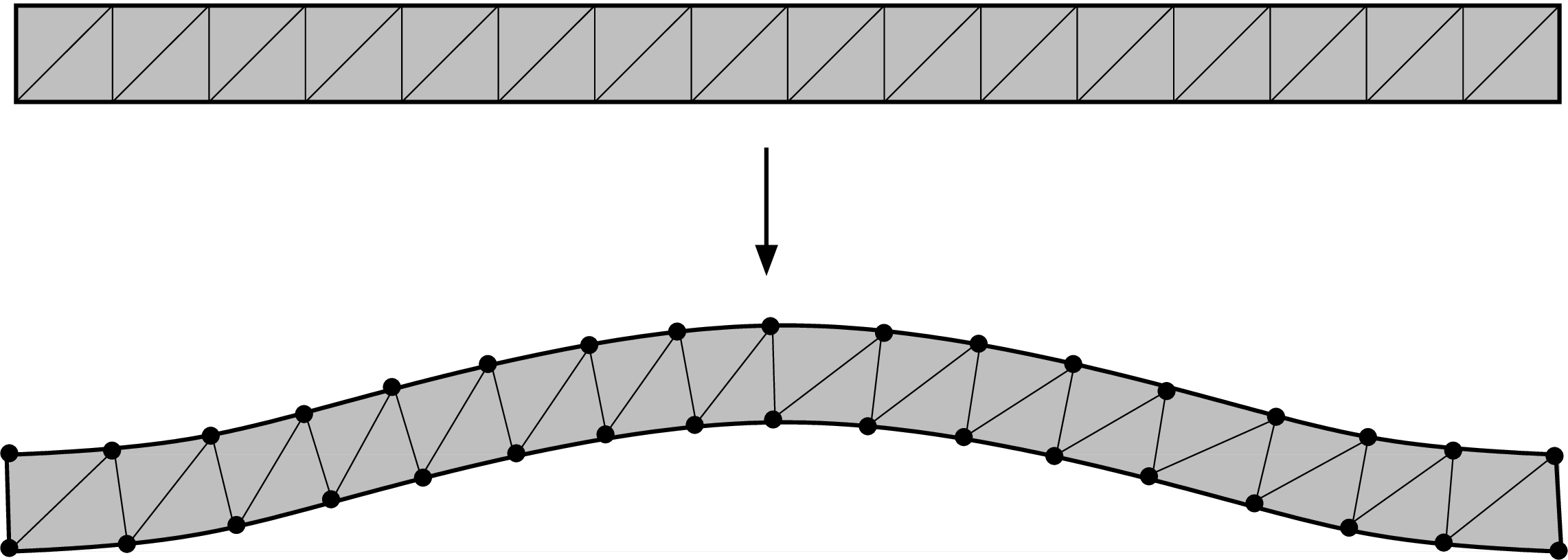,height=1.5in}
}
\caption{ \label{9B}
Piecewise linear maps define a $1+O(\frac 1n)$-QC map
from a rectangle to a tube.
}
\end{figure}

By repeating the  construction we can map several parallel straight tubes to
several  curved tubes as in Figure \ref{6}. We assume  there are $\simeq n$
 tubes, all  have width
$\simeq 1/n$ and are vertically separated by $\simeq 1/n$. This, combined
with the fact that the edges of the tubes have bounded slope, implies that
the piecewise linear map previously defined on the union of tubes can be
extended to a quasiconformal map  $\tilde{f}_n$ of $Q$ to itself with
uniformly bounded constant $K$ (independent of $n$).

\begin{figure}[htb]
\centerline{
\psfig{figure=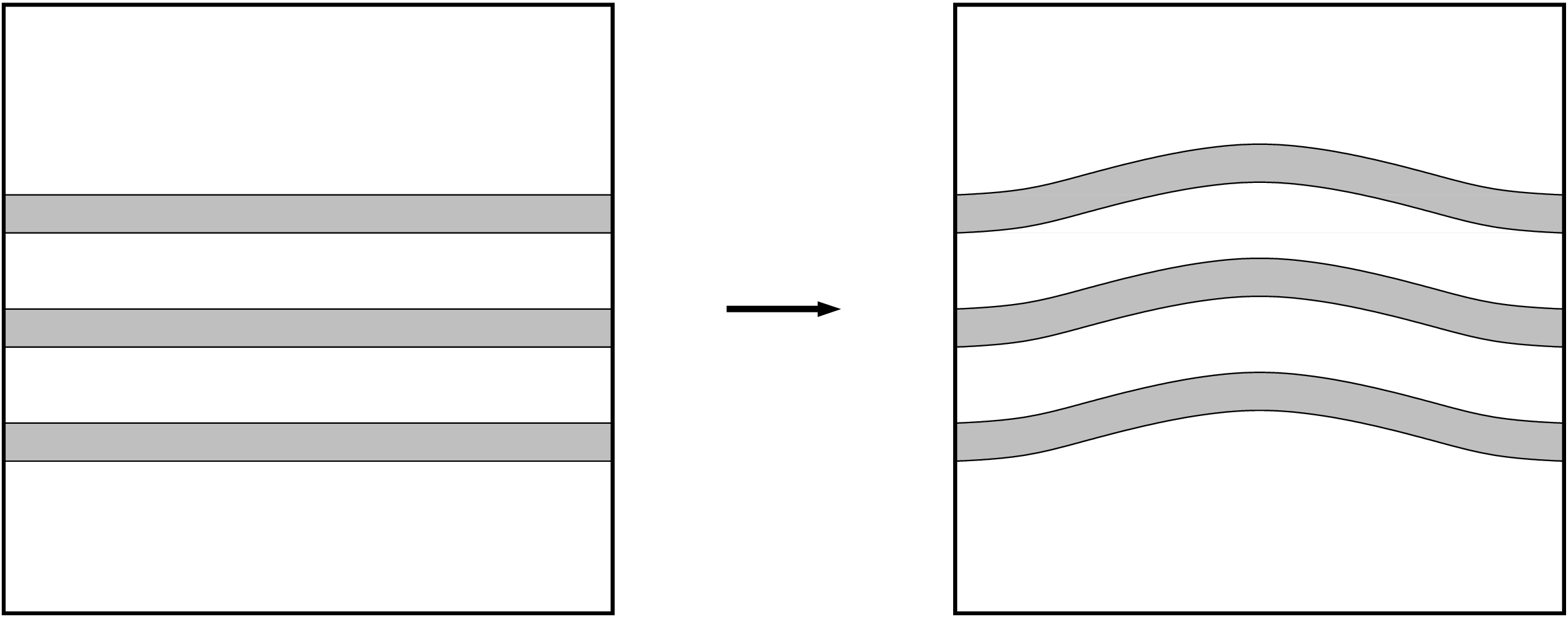,height=1.5in}
}
\caption{ \label{6}
Repeat the construction on several parallel tubes
}
\end{figure}

Now  we  must repeat the construction  at smaller scales, without letting the
dilatations blow up. However, this is quite simple.  We construct a Cantor
set as before, with nested families ${\cal I}_1 \supset {\cal I}_2 \supset
\dots$ of intervals.

For any $I \in {\cal I}_k$, define  a subcollection of  $\simeq n_{k+1} $
sub-intervals in the middle half of $I$ that have length $|I|/n_{k+1}$ and
are separated from each other by at least   $ |I|/n_{k+1}$. Doing this for
each $I\in {\cal I}_k$ defines ${\cal I}_{k+1}$.

Now, let ${\cal R}_k=\{[0,1]\times I:I\in{\cal I}_k\}$ be the family of $k$th
generation rectangles.  Each of these may be divided into squares, on which
we apply a scaled copy of the map
$\tilde{f}_{n_{k+1}}$, to obtain a $K$-quasiconformal map of $Q$ to itself,
which restricts to the identity on $Q\backslash \bigcup {\cal R}_k$, and is
$1+O(\frac{1}{n_{k+1}})$-QC on $\bigcup {\cal R}_{k+1}$.  Denote this map by
$f_{n_k}$.

Now, let $g_k = f_{n_1}\circ\dotsb\circ f_{n_k}$.  Then $g_k$ is $K+ O( \frac 1{n_1} + \dots + \frac 1{n_k})$ quasiconformal, as every point in $\bigcup {\cal R}_j\backslash \bigcup {\cal R}_{j+1}$ is hit by some number of iterations of the identity, then by a $K$-QC map, and then by successive $1+\frac{1}{n_i}$ quasiconformal maps, $i< j$. %since every point in ${\cal R}_j\bigcup {\cal R}_{j+1}$ will be unmoved by $f_{n_{j+1}}\circ\dotsb\circ f_{n_k}$, then will be mapped inside $\bigcup {\cal R}_j$ by the $K$-quasiconformal map $f_j$, and then

Thus if we choose $n_k$ to increase so quickly that $\sum_k n_k^{-1} $
converges to a small value, the dilatations will be uniformly bounded. In
this case, we can take a limit and obtain a map and a Cantor set $S$ so that
every segment $[0,1]\times\{y\}$, $y \in S$ has a definite oscillation  near
every point at infinitely many scales, and hence is nowhere rectifiable.

The set $S$ comes with a covering by the  nested collections of intervals
${\cal I}_k$. The Hausdorff measure  of $S$ can be bounded from below in the
usual way of defining a measure on $S$ by distributing mass from one of these
intervals to all its children equally.   Since   $\limsup_{n\rightarrow
\infty} n h( \delta/  n) =\infty$ for any fixed $\delta >0$, by choosing
$n_k$ large enough we can insure that
$$ \sum h(|I_j|)  \geq 2 h(|I|),$$
where the sum is over the  ${\cal I}_{k+1}$  of $I \in {\cal I}_k$. A
standard argument then shows that $S$ has infinite $h$-measure.
\end{proof}

%The set $E$ we constructed is porous on infinitely many scales,
    %i.e.,  there is a sequence $r_n \searrow 0$ so that every
%point of $E$ in within $O(r_n)$ of an interval of length
%$r_n$ that misses $E$. Our construction essentially ``pushes''
%the distortion away from $E$ and into these gaps. Is it
%possible to construct an example which is not porous in this sense?

%\section{Remarks and Open problems}

\section{Remarks and Open Questions}\label{Section:remarks&corollaries}

Many natural questions in the vein of our results still remain open. The
following question was suggested to us by Nages Shanmugalingam.

\begin{question}
Let $f:X\to Y$ be a quasisymmetric mapping between Ahlfors $D$-regular
($D$-Loewner) spaces, $D>1$. Is it true that for a fixed $d\in[0,D]$ and for $\m_{D/d}$
almost every Ahlfors $d$-regular set $E\subset X$, we have that $f(E)$ is
Ahlfors $d$-regular as well?
\end{question}

This question is open even in $\mathbb{R}^N$, except for $d=N$, which follows
from a theorem of Gehring and Kelly \cite{GehringKelly}. The case of $d=D$
for more general metric spaces was considered by Korte, Marola and
Shanmugalingam \cite{KMShan}.

%For example, to obtain a set of dimension $1$ in Theorem \ref{thm:unrectifiable}, we had to drop the requirement that subarcs have uniform dimension expansion.  Might the theorem remain true if we compromised, demanding still that the map expand the dimension of every subinterval, but not by a uniform factor?

%\begin{question}
%Does Theorem \ref{unrectifiable} remain true

%One can also ask which specific sets $S$ admit QC maps satisfying various
%dimension distortion or unrectifiability criteria. Theorem \ref{first thm} is
%a result in this vein, showing that the particular Cantor sets constructed admit
%QC maps with dimension distortion bounded as far from $1$ as Corollary
%\ref{expandsmetric} permits.  What about arbitrary Ahlfors regular Cantor
%sets?  Non-regular sets?

\subsection{Families of translates}
The questions below are formulated in the case of the Euclidean space, but
would also be interesting for Carnot groups.

How large is the collection of translates of a set $E$ the dimension of which
can simultaneously jump up by a prespecified amount? To be more precise, let
$E\subset \mathbb{R}^N$ be a bounded Ahlfors $d$-regular set, $d\in(0,N)$,
and $f$ a quasiconformal map of $\mathbb{R}^N$. For $d'\in(d,N)$, we would
like to estimate from above the Hausdorff dimension of the points $y$ s.t.
$\dim_H f(y + E)
> d'$?
%
%More precisely, let
%%
%$$\calE_f(E,d,d')=\{y\in \mathbb{R}^N : \dim_H f(y + E) > d' \}.$$
%%
%Theorem \ref{thm:carnot} implies that $\calH^N(\calE_f(E,d,d)) = 0.$ Thus
%$\dim_H(\calE_f(E,d,d)) \leq N$. What can be said about $\calE_f(E,d,d')$ for
%$d'>d$?
%Theorem \ref{thm:nonexpand} leads us to believe that the following questions
%may have an affirmative answer.

\begin{question}\label{question:translates}
Suppose $E\subset\mathbb{R}^N$ is a bounded $d$-regular set. Is it true that
for $d'\in(d,N)$ and for every QC map $f$ of $\mathbb{R}^N$ we have
 \begin{equation}\label{question:dimincrease}
\dim_H \{y\in \mathbb{R}^N : \dim_H f(y + E) > d'\} \leq
      \frac{d}{d'}N?
 \end{equation}
\end{question}
%This is open even in $\mathbb{R}^N$. Thus we formulate also the question as
%follows. Does the following inequality hold for every (Ahlfors regular)
%$E\subset \mathbb{R}^N$ and every $t\in(1, N/d)$.
%
%\begin{equation}
%\dim_H \{y\in \mathbb{R}^N : \dim_H f(y\cdot E) > t \,\dim_H E \} \leq \frac{N}{t}?
%\end{equation}
%
%\begin{question}
%
%\end{question}
%%
%\begin{question}
%%Suppose $E\subset\mathbb{R}^N$ is a bounded $d$-regular set, $d\in(0,N)$ and
%%$f$ is a QC map of $\mathbb{R}^N$.
%Is it true that $\dim_H(\calE_f(E,1))<Q$?
%\end{question}

To estimate the dimension of the set appearing in (\ref{question:dimincrease}) we
would like to use Corollary \ref{cor:expansion} and a stronger version of
Lemma \ref{lem:translatesmodulus}, which we formulate in the case of
$\mathbb{R}^N$.

Recall from Section \ref{Section:translatesproductsproofs}, that if
$y\in\mathbb{R}^N$ and $\la$ is a measure on $\mathbb{R}^N$ we denoted by
$y_*\la$ the pushforward of $\la$ by the translation by $y$, and for
$K\subset\mathbb{R}^N$ we let  $K_*\la=\{y_*\la : y\in K\}$.

\begin{question}\label{question:modulus}
Let $p\geq 1$, $1<D < N$ and $K\subset\mathbb{R}^N$. Suppose that
$\mod_p(K_*\la,\mu)=0,$ whenever $\mu$ is an upper $D$-regular measure on
$\mathbb{R}^N$. Is $\dim_H K \leq D$?
\end{question}

By Corollary \ref{cor:expansion}, an affirmative answer to Question
\ref{question:modulus} would imply an affirmative answer to Question
\ref{question:translates}.

\subsection{Sharpness for nonplanar mappings} Though Theorems \ref{thm:nonexpand} and \ref{expandsmetricquantitative} are quite general, our constructions in Theorems \ref{first thm} and \ref{thm:unrectifiable} seem difficult to extend beyond the planar case, as we do not have flexibility in constructing conformal mappings in higher dimensions.  This leads to the following questions:
\begin{question}
Does an analogue to Theorem \ref{first thm} remain true in higher dimensions?
For example, given positive integers $m,n$, $N=m+n\geq 3$, and $t\in (0,n)$,
is there a  $t$-regular subset $S\subseteq \mathbb R^n$ such that for each
$\epsilon>0$, there is a quasiconformal map $f\colon \mathbb R^N\rightarrow
\mathbb R^N$, such that for every Borel set $A\subseteq \mathbb R^m\times S$,
\begin{equation*}
\dim_H(f(A))\geq (1-\epsilon)\frac{N\dim_H(A)}{t+m}\text{?}
\end{equation*}
\end{question}
\begin{question}
Suppose $m$, $n$, $N$ are as before, and that
\begin{equation}
\label{gaugelimithigherdim}
\lim_{t \to 0}    \frac {h(t)}{t^n} = \infty\text{,}
\end{equation}
Is there a compact set $S\subseteq [0,1]^{n}$ and a quasiconformal map
$f\colon \mathbb R^N\rightarrow \mathbb R^N$ so that
\begin{enumerate}
\item The quasiconformal constant of $f$ is bounded independent of $h$ and
    $S$.
\item $S$ has infinite Hausdorff measure with respect to $h$ (cf.\
    Definition \ref{def:hmeas} below).
\item $f([0,1]^m\times \{y\})$ contains no $m$-rectifiable subset for any
    $y \in S$.
\end{enumerate}
Recall that a set is $m$-rectifiable if it is the union of an
$\calH^n$-nullset and a countable family of Lipschitz images of subsets of
$\mathbb R^m$.
\end{question}

In the preceding question, one can consider other conditions in place of the
absence of rectifiable subsets. For example, one could still ask in higher
dimensions that the surfaces $f([0,1]^m\times \{y\})$ contain no rectifiable
curves.

An even stronger requirement would be that the surfaces can be parameterized
by a map satisifying $|\phi(x) - \phi(y)| > \eta(|x-y|)$ for some $\eta$ with
$\eta(t)/t \to \infty$ as $t \to 0$".  It is easy to see from the proof that
our planar example in  Theorem \ref{first thm} has this latter property.

In higher dimensions, examples of such highly non-rectifiable surfaces were
constructed by David and Toro in  \cite{DavidToro} and by the first author in
\cite{Bishop:surface}.

\subsection{Distortion by Sobolev mappings} We lastly point out that these questions may be, and have been, explored in other classes of mappings.

As mentioned before, frequency of dimension distortion of Sobolev mappings
$f\in W^{1,p}(\mathbb{R}^n, Y)$ were studied in \cite{Tyson:frequency} for
$p\geq n$. Dimension distortion in the case $1<p<n$ has been explored by
Hencl and Honz\'{i}k, cf. \cite{Hencl:subcritical},
\cite{Hencl:dimdistortion}. Similar questions for more general source spaces,
e.g. for the Heisenberg group, have been considered by Balogh, Mattila, Tyson
and Wildrick in \cite{BMT:Grassmanian}, \cite{BTW:dimdistortion}
and\cite{BTW:heisenberg}. Hencl and Honz\'{i}k also considered dimension
distortion for  mappings in Triebel-Lizorkin spaces in
\cite{Hencl:triebel-lizorkin}.

As for results in the vein of Theorems \ref{first thm} and
\ref{thm:unrectifiable}, examples of mappings which simultaneously expand
large families of subspaces or subsets have been exhibited in
\cite{Tyson:frequency}, \cite{BTW:dimdistortion}, \cite{Hencl:subcritical}
and \cite{Hencl:triebel-lizorkin} in various contexts. In fact it was shown
in these works that such mappings are ``generic" in some sense. We refer the
reader to the mentioned papers for more precise statements and definitions.

Even so, it is unclear to what extent our results extend to the Sobolev case.
For example, we do not know the answer to the following question.
\begin{question}
Suppose $N\geq2$ and $0<d<N$. Let $f:\mathbb{R}^N\to\mathbb{R}^N$ be a
continuous Sobolev mapping, $f\in W^{1,p}(\mathbb{R}^N,\mathbb{R}^N), p>N$.
Is it true that for $D/d$-almost every Ahlfors $d$-regular set
$E\subset\mathbb{R}^N$ we have $\dim_H f(E) \leq d$?
\end{question}

\end{document}